\let\mathcal\mathscr
\def\@tocline#1#2#3#4#5#6#7{\relax
 \ifnum #1>\c@tocdepth 
 \else
 \par \addpenalty\@secpenalty\addvspace{#2}%
 \begingroup \hyphenpenalty\@M
 \@ifempty{#4}{%
 \@tempdima\csname r@tocindent\number#1\endcsname\relax
 }{%
 \@tempdima#4\relax
 }%
 \parindent\z@ \leftskip#3\relax \advance\leftskip\@tempdima\relax
 \rightskip\@pnumwidth plus4em \parfillskip-\@pnumwidth
 #5\leavevmode\hskip-\@tempdima
 \ifcase #1
 \or\or \hskip 1em \or \hskip 2em \else \hskip 3em \fi%
 #6\nobreak\relax
 \dotfill\hbox to\@pnumwidth{\@tocpagenum{#7}}\par
 \nobreak
 \endgroup
 \fi}
\numberwithin{equation}{section}
\newtheorem{theorem}{Theorem}[section]
\newtheorem*{theorem*}{Theorem}
\newtheorem{lemma}[theorem]{Lemma}
\newtheorem*{lemma*}{Lemma}
\newtheorem{corollary}[theorem]{Corollary}
\newtheorem*{corollary*}{Corollary}
\newtheorem*{conjecture*}{Conjecture}
\theoremstyle{definition}
\newtheorem*{remark*}{Remark}
\newtheorem*{definition*}{Definition}
\newtheorem{example}[theorem]{Example}
\newtheorem*{example*}{Example}
\DeclareMathOperator{\Gal}{Gal} 
\DeclareMathOperator{\Hilb}{Hilb} 
\DeclareMathOperator{\Sym}{Sym} 
\DeclareMathOperator{\Pic}{Pic} 
\DeclareMathOperator{\rk}{rk} 
\DeclareMathAlphabet{\xcal}{OMS}{cmsy}{m}{n} 
\DeclareMathOperator{\ch}{char} 
\newcommand{\Fq}{\mathbb{F}_{q}}
\newcommand*\diff{\mathop{}\!\mathrm{d}}
\newcommand{\mbC}{{\mathbb C}}
\newcommand{\mbN}{{\mathbb N}}
\newcommand{\mbP}{{\mathbb P}}
\newcommand{\mbQ}{{\mathbb Q}}
\newcommand{\mbR}{{\mathbb R}}
\newcommand{\mbZ}{{\mathbb Z}}
\newcommand{\mfS}{{\mathfrak S}}
\renewcommand{\leq}{\leqslant}
\renewcommand{\geq}{\geqslant}
\begin{document}

\title[Counting points on $\Hilb^m\mbP^2$ over function fields]{Counting points on $\Hilb^m\mbP^2$ over function fields}

\author{Adelina\ M\^{a}nz\u{a}\c{t}eanu}
\address{ School of Mathematics\\University of Bristol\\Bristol\\BS8 1TW\\UK \& IST Austria\\Am Campus 1\\3400 Klosterneuburg\\Austria}

\email{am15112@bristol.ac.uk}

\begin{abstract}
Let $K$ be a global field of positive characteristic. We give an asymptotic formula for the number of $K$-points of bounded height on the Hilbert scheme $\Hilb^2\mbP^2$ and show that by eliminating an exceptional thin set, the constant in front of the main term agrees with the prediction of Peyre in the function field setting. Moreover, we extend the analogy between the integers and $0$-cycles on a variety $V$ over a finite field to $0$-cycles on a variety $V$ over $K$ and establish a version of the prime number theorem in the case when $V=\mbP^2$.
\end{abstract}

\date{\today}

\thanks{2010 {\em Mathematics Subject Classification.} 11G35 (14C05, 14G05)}

\maketitle

\tableofcontents

\thispagestyle{empty}
\section{Introduction}\label{Introduction}

Let $X$ be a smooth projective variety defined over a global field $K$ such that the set $X(K)$ of rational points is Zariski dense and let $\xcal{L}$ be an ample line bundle on $X$. One can define a height function $H_{\xcal{L}}$ on $X(K)$ taking values in $\mbR_{>0}$ in order to study the  asymptotic behaviour of the number of $K$-points of bounded height on $X$. 

Manin and his collaborators (see \cite{BM}, \cite{FMT}) predict that when $X$ is a Fano variety and $K$ is a number field, there exists an open subset $U$ of $X$ such that
\begin{equation*}
\#\left\{ x \in U(K) : H_{\omega^{-1}_X}(x) \leq B \right\} \sim C_{H_{\omega^{-1}_X}} B \left(\log B\right)^{t-1},
\end{equation*}
as $B \to \infty$, where $H_{\omega^{-1}_X}$ is the height associated to the anticanonical line bundle $\omega_X^{-1}$ and $t$ is the rank of the Picard group of $X$. Moreover, Peyre \cite{Peyre95} gave a conjectural interpretation of the constant $C_{H_{\omega_X^{-1}}}$. The first counterexample was provided by Batyrev and Tschinkel \cite{BT1}. This led to a refined version of Manin's conjecture in which one is allowed to remove a finite number of thin sets as defined by Serre \cite[\S 3.1]{Serre08}. Results that support the need for the thin set version of this conjecture have been proven by Le Rudulier \cite[Theorem 4.2]{Rudulier} and Browning--Heath-Brown \cite[Theorem 1.1]{BHB}.

In the case when $K$ is a global field of positive characteristic, Peyre \cite[Theorem 3.5.1]{Peyre12} proposed an analogue of \cite{Peyre95}. It is natural to extend the thin set version of Manin's conjecture to this case. The goal of this paper is to provide an example that supports this conjecture in the case of function fields of positive characteristic. Consider $\Hilb^2\mbP^2$ over a function field $K$ of degree $e$ over $\Fq(t)$ such that $\ch(K)>2$. 

\begin{theorem}\label{MainResult0} 
There exists a non-empty thin set $Z_0\subset \Hilb^2\mbP^2(K)$ such that 
\begin{align*}
\# \left\{z \in  \Hilb^2 \mbP^2(K) \setminus Z_0  : H_{\omega_{X}^{-1}} (z) = q^M \right\} = c  q^{M} M + O\left(\sqrt{M}q^{M}\right),
\end{align*}
as $M\to \infty$, where the leading constant agrees with the prediction of Peyre.
\end{theorem}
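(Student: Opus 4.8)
\emph{Strategy.} A $K$-point of $\Hilb^2\mbP^2$ is a length-$2$ closed subscheme $Z\subset\mbP^2_K$, and through the Hilbert--Chow morphism $\mu\colon\Hilb^2\mbP^2\to\Sym^2\mbP^2$ these fall into three types: (i) reduced pairs $\{P_1,P_2\}$ of distinct $K$-rational points; (ii) closed points of degree $2$, i.e.\ Galois orbits $\{P,\bar P\}$ with $P\in\mbP^2(L)$ and $[L:K]=2$; and (iii) non-reduced subschemes --- tangent directions at a $K$-rational point --- filling out the exceptional divisor $E=\mu^{-1}(\Delta)$. Since $\mu$ is crepant, $\omega_X^{-1}=\mu^*\omega_{\Sym^2\mbP^2}^{-1}$ is big and nef but not ample (it is trivial on the fibres of $\mu|_E$), so the anticanonical height has finite level sets only after discarding $E$; and since $E(K)$ together with the reduced pairs is exactly the image in $\Hilb^2\mbP^2(K)$ of the degree-$2$ quotient morphism from the blow-up of $\mbP^2\times\mbP^2$ along its diagonal, it is a non-empty thin set, which will serve as the exceptional set $Z_0$. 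The theorem then amounts to counting the degree-$2$ closed points of $\mbP^2_K$ by anticanonical height.

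\emph{The height.} With $\Pic(\Hilb^2\mbP^2)=\mbZ D_H\oplus\mbZ\delta$, where $D_H$ is the divisor of subschemes meeting a fixed line and $E=2\delta$, crepancy forces $-K_X=3D_H$, so $H_{\omega_X^{-1}}=H_{D_H}^{3}$ and $\rk\Pic(X)=2$, matching the exponent $1$ of $M$. Pulling $D_H$ back through $\mu$ and through $\mbP^2\times\mbP^2\to\Sym^2\mbP^2$, one finds that for $\mathfrak p=\{P,\bar P\}$ with $P\in\mbP^2(L)$ the height $H_{D_H}(\mathfrak p)$ equals, up to bounded factors, the relative height $H_{\mbP^2/L}(P)$; equivalently it is the norm of the degree-$2$ effective $0$-cycle $\mathfrak p$ on $\mbP^2_K$ in the $0$-cycle/integer dictionary developed earlier in the paper. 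Hence one must evaluate $\#\{\mathfrak p\ \text{closed},\ \deg\mathfrak p=2:\ H_{\mbP^2/L}(P)^3=q^M\}$, which reorganises as a sum over quadratic extensions $L/K$ of the function-field Schanuel count for $\mbP^2$ over $L$ under an exact-height constraint.

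\emph{The count.} For a fixed $L$, the number of $P\in\mbP^2(L)$ of relative height $q^h$ is $\rho_L\,q^{3h}$ up to a lower-order error, while a point of $\mbP^2(L)\setminus\mbP^2(K)$ forces the conductor-degree of $L$ to be bounded linearly in $h$; conversely the number of quadratic extensions $L/K$ of conductor-degree $d$ grows like $q^{d}$. These two exponential rates balance, so summing over the $L$ of each conductor-degree yields a geometric series of length $\asymp h$: this upgrades the \emph{simple} pole at $s=1$ of $\sum_{P\in\mbP^2(L)}H_{\mbP^2/L}(P)^{-3s}$ to a \emph{double} pole at $s=1$ of the degree-$2$ height zeta function $Z(s)=\sum_{\mathfrak p}H_{\omega_X^{-1}}(\mathfrak p)^{-s}$, the identity $\rk\Pic(X)=2$ surfacing analytically. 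I would then establish meromorphic continuation of $Z(s)$ a little past $\Re s=1$, with this double pole as its only singularity there and at most polynomial growth on vertical lines of the cylinder $\Re s\in\mbR$, $\Im s\in\mbR/\tfrac{2\pi}{\log q}\mbZ$, and deduce $\#\{z\in\Hilb^2\mbP^2(K)\setminus Z_0:\ H_{\omega_X^{-1}}(z)=q^M\}=cq^{M}M+O(\sqrt{M}\,q^{M})$ by a Tauberian/Perron argument; the Schanuel errors over the varying $L$, the error in counting extensions of given conductor, and the truncation in Perron's formula combine --- through a second-moment (large sieve) bound over the quadratic characters --- into the stated error.

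\emph{Peyre's constant, and the main obstacle.} It remains to compute Peyre's prediction in the function-field setting \cite[Theorem 3.5.1]{Peyre12} for $X=\Hilb^2\mbP^2$ --- the effective-cone constant $\alpha(X)$, the cohomological term $\beta(X)=\#H^1(\Gal(\bar K/K),\Pic(X_{\bar K}))$ (here equal to $1$, as $\Gal$ acts trivially on $\Pic$), and the Tamagawa measure $\tau_{\omega_X^{-1}}$ of $X(\mbA_K)$ with its convergence factors --- and to match it with the constant $c$ produced by the Tauberian step. I expect this matching to be the crux: the local densities of $\Hilb^2\mbP^2$ must be recognised as precisely the Euler factors coming out of the conductor sum over quadratic extensions, and one has to verify that $Z_0$ absorbs exactly the surplus contributed by types (i) and (iii) --- without removing $Z_0$ the count is still of order $q^{M}M$ but with a strictly larger constant, mirroring Le Rudulier's number-field example \cite{Rudulier}. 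A secondary, purely quantitative obstacle is that reaching the error $O(\sqrt{M}\,q^{M})$, rather than a soft $o(Mq^{M})$, requires uniform control of the Schanuel constants $\rho_L$ and of their averages, together with the point-counting error terms over the whole family of quadratic extensions.
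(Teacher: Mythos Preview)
Your overall architecture is right and matches the paper: take $Z_0$ to be the thin set coming from the degree-$2$ cover $\mathrm{Bl}_{\Delta}(\mbP^2\times\mbP^2)\to\Hilb^2\mbP^2$ (equivalently $\varepsilon^{-1}\pi(\mbP^2\times\mbP^2(K))$), use crepancy of the Hilbert--Chow morphism to reduce the count on $\Hilb^2\mbP^2(K)\setminus Z_0$ to counting irreducible points of $\Sym^2\mbP^2(K)$, and identify these with quadratic points $x\in\mbP^2(\overline K)$, $[K(x):K]=2$, of given absolute height. Where you diverge from the paper is in how that quadratic-point count is obtained. The paper does not build the height zeta function over the family of quadratic extensions, establish meromorphic continuation, and run a Tauberian/large-sieve argument; it simply \emph{quotes} the theorem of Kettlestrings--Thunder \cite{KT}, which already gives
\[
N_K(3,2,M)=2\,S_K(3,1)^2 q^{3M}M+O\bigl(q^{3M}\sqrt{M}\bigr),
\]
and this is exactly the statement needed, error term included. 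Your programme is essentially a sketch of how one would \emph{reprove} that theorem; it is plausible and close in spirit to what Kettlestrings--Thunder do, but it is a substantial detour when the result is available off the shelf.

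You also overestimate the difficulty of the Peyre verification. In the paper this is not ``the crux'' but a short direct computation (their Lemma for $\Hilb^m\mbP^2$ specialised to $m=2$): with $\Pic(\Hilb^2\mbP^2)=\mbZ H\oplus\mbZ\tfrac{E}{2}$, effective cone spanned by $H-\tfrac{E}{2}$ and $E$, and $\omega_X^{-1}=3H$, one finds $\alpha^*=1/9$, $\beta=1$, and the local densities $\omega_v=|\Hilb^2\mbP^2(\mathbb F_{q_v})|/q_v^4=1+2q_v^{-1}+3q_v^{-2}+2q_v^{-3}+q_v^{-4}$ combine with convergence factors $(1-q_v^{-1})^2$ to give $\zeta_K(3)^{-2}$. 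This yields $c_{\xcal H}(\Hilb^2\mbP^2)=S_K(3,1)^2/\bigl(9(\log q)^2\bigr)$, which matches the leading constant from the Kettlestrings--Thunder count on the nose; no delicate identification of Euler factors with conductor sums is needed.
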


We shall present the geometry of the Hilbert scheme in Section \ref{S:Geometry of the Hilbert scheme}, together with a more refined version of Theorem \ref{MainResult0}. This can be seen as a function field analogue of \cite[Theorem 4.2]{Rudulier}. However, the result of Le Rudulier is only valid over $\mbQ$, whereas Theorem \ref{MainResult0} holds over any finite extension of $\Fq(t)$. Le Rudulier's strategy is to use the geometry of $\Hilb^2\mbP^2$ to reduce the problem to counting points in $\mbP^2$ that are quadratic over $\mbQ$. Then, one appeals to work of Schmidt \cite[Theorem 3]{Schmidt2}. Suppose $K$ is a number field of degree $e$ over $\mbQ$, $d$ is a positive integer, and define the quantity
\begin{align*}
N_{K}(n+1,d,B) = \# \left \{ P \in \mbP^{n} (\overline{\mbQ}) : H'(x) \leq B, [K(P):K] \leq d \right \},
\end{align*}
where $H'(x) = \prod_{v \in \Omega_K}  \max_{0\leq i \leq n} |x_i|_v$. Schanuel \cite{Schanuel} proved that 
\begin{align}\label{eq:Schanuel}
N_{K}(n+1,1,B) = S_K(n+1,1)B^{(n+1)e} + O_{K,n}\left(B^{(n+1)e-1}\right),
\end{align}
with an additional $\log B$ term in the error when $K = \mbQ$ and $n = 1$. The leading constant is called the \emph{Schanuel constant} 
\begin{align}\label{eq:SchanuelsConstant}
S_K(n+1,1) = \left(\frac{2^{r}(2\pi)^s}{|\Delta|^{1/2}}\right)^{n+1} \frac{(n+1)^{r+s-1} hR}{w\zeta_K(n+1)},
\end{align}
where $r$ is the number of real embeddings of $K$, $s$ the number of pairs of distinct complex conjugate embeddings of $K$, $\Delta$ the discriminant of $K$, $h$ is the class number of $K$, $R$ the regulator of $K$, $w$ the number of roots of unity in $K$, and $\zeta_K$ the Dedekind zeta-function of $K$. Schmidt \cite{Schmidt2} generalised this result for quadratic number fields. More precisely, the result we are interested in states that
\begin{align}\label{eq:Schmidt}
N_{\mbQ}(3,2,B) = \frac{24+2\pi^2}{\zeta(3)^2}B^6 \log B + O\left(B^6 \sqrt{\log B}\right),
\end{align}
where the leading constant is a sum of Schanuel constants over extensions of degree $d=2$ over $K=\mbQ$.

The method of proof for Theorem \ref{MainResult0} is similar, and begins with a function field analogue of \eqref{eq:Schmidt}. Given a function field $K$, let $e = [K: \Fq(t)]$, and define the quantity
\begin{align*}
N_K(n+1,d,M) = \# \left\{ P \in \mbP^{n}(\overline{\Fq(t)}) : H_n(P) = q^\frac{M}{ed}, [K(P):K]=d \right\},
\end{align*}
where $H_n$ denotes the usual absolute height on projective space $\mbP^{n}$ with respect to the ground field $\Fq(t)$. More precisely, given a point $x \in \mbP^{n}(\overline{\Fq(t)})$ of degree $d$ over $K$ with homogeneous coordinates $ \left [ x_0 : \ldots : x_n \right ] $ we have
\begin{align}\label{eq:FctFieldHeight}
H_n(x) =\left( \prod_{v \in \Omega_{K(x)}}  \max_{0\leq i \leq n} |x_i|_v\right)^{\frac{1}{de}},
\end{align}
where $K(x)$ is the field obtained by adjoining all quotients $x_i/x_j$ and $\Omega_{K(x)}$ is the set of places of $K(x)$. For details on why this is the right choice of height for such a point see \cite{TW} and \cite[\S 15.1]{A}. The analogue of \eqref{eq:Schanuel} was initially stated by Serre \cite[Section 2.5]{Serre89} who gave a formula for the constant
\begin{align}\label{eq:SchanuelsConstantFctFields}
S_K(n+1,1) = \left(\frac{1}{q^{g_K-1}}\right)^{n+1} \frac{J_K}{(q-1)\zeta_K(n+1)},
\end{align}
where $g_K$ is the genus of $K$, $J_K$ is the number of divisor classes of degree 0 which is the cardinality of the Jacobian of $K$ and $\zeta_K$ is the zeta function of $K$. Later Wan \cite{Wan} and DiPippo \cite{DiPippo} independently gave a proof of this result. Further work has been done by Thunder and Widmer \cite{TW} who obtain results for $d\geq 1$ and a more precise error term in the case $d=1$. In particular, they prove that for $M \geq 2g_K -1$ and $1/4 \geq \varepsilon >0$, we have
 \begin{align}\label{eq:TW}
 N_K(3,1,M) = S_K(3,1)q^{3M} + O\left(q^{(M+g_K)(1+\varepsilon)}\right),
\end{align}
and if $M <2g_K-1$, then for all $\varepsilon >0$, we have
 \begin{align}\label{eq:TW2}
N_K(3,1,M) \ll q^{M(2+\varepsilon)},
\end{align}
where the implicit constants depend only on $n, e, q$ and $\varepsilon$. In the case when $d=2$, Kettlestrings and Thunder \cite{KT} improved the result of Thunder and Widmer \cite{TW} and showed that
\begin{align}\label{KT}
N_{K}(3,2,M) = 2\left(S_K(3,1)\right)^2q^{3M}M + O\left(q^{3M}\sqrt{M}\right)
\end{align}
where the implicit constant depends only on $K$. 

The final section is concerned with an application of Theorem \ref{MainResult0}. The analogy between integers and polynomials in one variable with coefficients in a finite field $\Fq$ can be extended to an analogy between positive integers and effective $0$-cycles on a variety $V$ over $\Fq$, i.e. formal sums $C = \sum_{i=1}^k n_i P_i$, where $n_i \in \mbN$ and $P_i$ are distinct closed points on $V$. In particular, primes correspond to closed points. In view of this idea, Chen \cite{Chen} establishes several results for $0$-cycles that can be seen as equivalent to classical theorems in analytic number theory. For example, the classical prime number theorem states that the probability that an integer between $e^m$ and $2e^m$ chosen uniformly at random to be prime is $\sim 1/m$ as $n \to \infty$. The analogue for $0$-cycles is given by \cite[Theorem 1]{Chen} and it says that for a geometrically connected variety $V$ over $\Fq$ of dimension $d \geq 1$, we have
\begin{equation}\label{eq:Chen}
\hspace{-0.4cm} \frac{\# \left\{ \text{closed points on $V$ of degree $m$} \right\}}{\# \left\{ \text{effective 0-cycles on $V$ of degree $m$} \right\}} \to \frac{1}{m\mathring{Z}(V,q^{-d})} + O\left(\frac{1}{mq^{m/2}}\right),
\end{equation}
as $m\to \infty$, where $\mathring{Z}(V, t) := Z(V, t)(1 - q^d t)$ and $Z(V, t)$ is the zeta function of $V$ over $\Fq$. 

Since an effective 0-cycle of degree $m$ on $V$ can be thought of as a point in $\Sym^mV(\Fq)$, we are inspired to consider a similar problem for $K$-points of $\Sym^m \mbP^2$, where $K$ is a finite extension of $\Fq(t)$, and thus, obtain a version of the prime number theorem for $0$-cycles on $\mbP^2$ over function fields. The prime 0-cycles of degree $m$ on $\mbP^2$ are precisely the $K$-points of degree $m$ of $\mbP^2$. As described in Section \ref{S:Geometry of the Hilbert scheme}, if $x \in \mbP^2(\overline{K})$ is of degree $m \geq 1$, then it has $m$ distinct Galois conjugates $x_1, \ldots, x_m$ and $\tilde{x} = \pi(x_1, \ldots, x_m)$ is a $K$-point of $\Sym^m \mbP^2$. Thus, primes correspond to the irreducible $K$-points of $\Sym^m \mbP^2$. We also notice that in this case, one needs to consider points of bounded height since the set of points $\Sym^m \mbP^2(K)$ is infinite. We obtain the following result. 

\begin{corollary}\label{C:PNT}
Let $m \geq 2$ and $K$ be a global function field of characteristic $>m$. Suppose that Manin\rq{}s conjecture holds for the irreducible points in $\Hilb^{m_0}\mbP^2(K)$ for all $m_0 \leq m$. Then, there exists a constant $c_m >0$ such that 
$$
\frac{\# \left\{ \text{prime effective $0$-cycles on $\mbP^2$ over $K$ of degree $m$} \right\}}{\# \left\{ \text{effective $0$-cycles on $\mbP^2$ over $K$ of degree $m$} \right\}} \to \frac{c_m}{M^{m-2}},
$$
$M\to \infty$.  
\end{corollary}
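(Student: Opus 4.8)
The plan is to compute both the numerator and the denominator asymptotically as $M \to \infty$ and take their ratio. For the denominator, an effective $0$-cycle of degree $m$ on $\mbP^2$ over $K$ corresponds to a $K$-point of $\Sym^m\mbP^2$, and bounding the height amounts to counting $K$-points of $\Sym^m\mbP^2$ of height $q^M$. Since $\Sym^m\mbP^2$ is a (singular) toric-type variety birational to $(\mbP^2)^m/\mfS_m$, and the anticanonical height counting on such a space should have growth $q^{cM}M^{t-1}$ for suitable $t$, I would extract the dominant contribution. The key point is that a generic effective $0$-cycle of degree $m$ splits as a sum of points of smaller degree, and the count is dominated by the stratum corresponding to $m$ distinct rational points (i.e. by $\Sym^m$ of the degree-$1$ locus), whose count is governed by the Schanuel-type asymptotics \eqref{eq:TW}: roughly $(S_K(3,1))^m q^{3M}$ up to combinatorial factors and, crucially, a power of $M$ coming from the partition of the total height budget $M$ among the $m$ summands, giving a factor of order $M^{m-1}$.

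For the numerator, prime $0$-cycles of degree $m$ correspond to irreducible $K$-points of $\Sym^m\mbP^2$, equivalently to closed points of $\mbP^2$ of degree $m$ over $K$, equivalently (via the Hilbert--Chow morphism and the discussion in Section \ref{S:Geometry of the Hilbert scheme}) to irreducible points in $\Hilb^m\mbP^2(K)$. By the hypothesis that Manin's conjecture holds for the irreducible locus of $\Hilb^{m_0}\mbP^2$ for all $m_0 \le m$ — with the case $m_0 = 2$ being precisely Theorem \ref{MainResult0} — the count of such points of anticanonical height $q^M$ is asymptotic to $c_m' q^{cM} M^{t'-1}$ for the appropriate Picard rank $t'$ of $\Hilb^m\mbP^2$. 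The two leading exponents of $q^M$ must agree (both counts live inside $\Sym^m\mbP^2(K)$ with the same height), so the ratio is $(c_m'/c_m'') M^{(t'-1)-(m-1)}$, and one checks that $(t'-1)-(m-1) = -(m-2)$; this is where the clean identification $t' = 2$ (the Picard rank of $\Hilb^m\mbP^2$ being $2$ for $m \ge 2$) enters, together with the observation that the denominator's exponent is $m-1$.

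The main obstacle I expect is twofold. First, making the denominator count precise: one must sum Schanuel-type estimates over all partitions of $m$ and over all ways of distributing the height $M = M_1 + \cdots + M_k$ among the parts, showing that the partition into $m$ distinct degree-one points dominates and that the resulting sum $\sum_{M_1 + \cdots + M_m = M} q^{3M_1 + \cdots + 3M_m} = q^{3M}\binom{M + m - 1}{m-1}$ indeed contributes the clean power $M^{m-1}$, while lower-dimensional strata and non-generic configurations contribute lower-order terms; here one has to be careful that \eqref{eq:TW2} (the small-$M$ regime) does not spoil things and that over-counting from coincidences among the points is negligible. Second, matching the leading $q^M$-exponents and the constants on the two sides requires knowing that the anticanonical height on $\Hilb^m\mbP^2$ pulled back along Hilbert--Chow agrees (up to bounded factors that wash out in the ratio) with the symmetric-power height used for $0$-cycles; this is the content of the geometric comparison in Section \ref{S:Geometry of the Hilbert scheme}, and for $m = 2$ it is exactly Theorem \ref{MainResult0}. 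Once these are in hand, the ratio collapses to $c_m/M^{m-2}$ with $c_m = c_m'/c_m'' > 0$, and the $O$-terms from \eqref{eq:Chen}-style error control and from the Manin asymptotics are of strictly smaller order, completing the proof.
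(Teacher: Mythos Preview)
Your proposal is correct and follows essentially the same route as the paper: the numerator is handled via the assumed Manin asymptotic for irreducible points in $\Hilb^m\mbP^2$ (Picard rank $2$, hence growth $q^M M$), while the denominator is shown (Theorem~\ref{T:SymmP2} via Lemma~\ref{BigRed} and a case-by-case stratification by Galois cycle type) to be dominated by the fully split stratum of $m$ distinct $K$-rational points, giving growth $q^M M^{m-1}$. The only refinement the paper adds beyond your sketch is that the height comparison along the Hilbert--Chow morphism is exact (crepancy, not merely ``up to bounded factors''), and the lower-order strata are dispatched using the product compatibility of Manin's conjecture (Theorem~\ref{thm:products}) together with the elementary Lemmas~\ref{L:Technical}--\ref{L:Technical2} rather than a direct partition-sum estimate.
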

In general, $c_m$ is quite complicated, but an explicit expression is given in Section \ref{S:ZeroCycles}. When $m=2$, however, $c_2 = \frac{2}{3}$ and the result is actually unconditional.
\section{Geometry of the Hilbert scheme}\label{S:Geometry of the Hilbert scheme} 

Let $K$ be a global field. Given a smooth projective variety $V$ over $K$, we define the \emph{$m^{\text{th}}$-symmetric product} of $V$ to be the projective quotient variety 
$$
\Sym^m V = V^m/ \mfS_m,
$$
where the symmetric group $\mfS_m$ acts on $V^m$ by permutating the factors. Denote by 
$$
\pi : V^m \to \Sym^m V
$$
the canonical projection. Let $x \in V(\overline{K})$ be a point of degree $m \geq 1$, that is a point such that the degree of the residue field $\kappa(x)$ over $K$ is equal to $m$. The orbit of $x$ under the action of $\Gal(\overline{K}/K)$ contains exactly $m$ distinct points $x_1, \ldots, x_m$ which are the conjugates of $x$. Then, as in \cite{Rudulier}, let $\tilde{x} = \pi(x_1, \ldots, x_m) \in \Sym^mV(\overline{K})$. Since $\tilde{x}$ is invariant under $\Gal(\overline{K}/K)$, it is rational over $K$. Le Rudulier \cite[Definition 1.32]{Rudulier} defines points in $\Sym^mV(K)$ of the shape $\tilde{x}$ to be \emph{irreducible} and the rest to be \emph{reducible}. Furthermore, according to \cite[\S 6]{Beauville}, if 
$$
\Delta = \bigcup_{1 \leq i < j \leq m} \left\{(v_1, \ldots, v_m) \in V^m \mid v_i = v_j \right\}
$$ 
is the diagonal in $V^m$, then $\Sym^m V$ is singular along $D=\pi(\Delta)$. 

Suppose from now on that $V$ is a surface. In this case, by \cite[\S 6]{Beauville}, there is a resolution of singularities of $\Sym^m V$ given by the Hilbert-Chow birational morphism
$$
\varepsilon : \Hilb^m V \to \Sym^m V
$$
and $E = \varepsilon^{-1}(D)$ is an irreducible divisor of $\Hilb^m V $. Moreover, Fogarty \cite[Theorem 2.4]{Fogarty68} proves that $\Hilb^m V$ is a smooth, irreducible projective variety of dimension $2m$ and $\varepsilon$ induces an isomorphism 
\begin{align}\label{eq:isom}
\Hilb^m V - E \cong \Sym^m V - D.
\end{align}
The Picard group of $\Hilb^m V$ is also computed by Fogarty \cite[Theorem 6.2]{Fogarty73} (see \cite{Fogarty77} for characteristic 2) who proved that it has rank 2 and that 
\begin{equation}\label{eq:PicHilbm}
\Pic \left( \Hilb^m \mbP^2 \right) = \mbZ H \oplus \mbZ \frac{E}{2},
\end{equation}
where $H$ is the locus of schemes intersecting a fixed line and $E$ is exceptional divisor introduced above and corresponds to the locus of non-reduced schemes. This result allowed Huizenga \cite[Theorem 1.4]{Huizenga} to compute the cone of effective divisors for $\Hilb^m \mbP^2$. This is spanned by 
\begin{equation}\label{eq:mu}
\mu H -\frac{E}{2} \quad \text{and} \quad E,
\end{equation}
where $\mu$ is a certain minimum slope that will not be defined here. However, for $2\leq m \leq 171$, the values of $\mu$ can be found in \cite[Table 1]{Huizenga}, and by \cite[Remark 7.4]{Huizenga}, and if $m$ is of the form $\binom {r+2} 2$, then $\mu = r$. We remark that in this notation, the anticanonical divisor of $\Hilb^m \mbP^2$ is $\omega^{-1}_{\Hilb^m \mbP^2} = 3H$.

Moreover, due to results of Beauville \cite{Beauville} in characteristic 0 and Kumar--Thomsen \cite[Corollary 1]{KuTh} if $\ch(K)>m$, $\varepsilon$ is crepant. Thus, as noted in \cite[Proposition 3.3 and \S 3.2]{Rudulier}, the anticanonical line bundle on $\Hilb^m V$ given by
$
\omega^{-1}_{\Hilb^m V}=\varepsilon^*\omega^{-1}_{\Sym^m V}
$ 
is big since it is the pull-back of an ample divisor by a birational morphism. The anticanonical height on $\Sym^m V$ induces a height on $\Hilb^m V$ given by
$
H_{\omega^{-1}_{\Hilb^m V} } = H_{\omega^{-1}_{\Sym^m V} } \circ  \varepsilon.
$
Now, as in \cite[\S 1]{Rudulier}, given a point $v = \pi(v_1, \ldots, v_m)\in \Sym^m V(K)$, we define a height
$$
H_{\omega^{-1}_{\Sym^m V}}(v) = H_{\omega^{-1}_V}(v_1) \ldots H_{\omega^{-1}_V}(v_m).
$$

For the remainder of this section, let $K$ be a function field of degree $e$ over $\Fq(t)$ and $H_n$ be the usual absolute height on projective space $\mbP^{n}$ with respect to the ground field $\Fq(t)$ as in \eqref{eq:FctFieldHeight}. Regarding $\mbP^n$ over the ground field $K$, we have that for $x\in\mbP^n(\overline{K})$, the absolute height on projective space associated to the anticanonical line bundle is 
$$
H_{\omega^{-1}_{\mbP^n}}(x) = H_n(x)^{(n+1)e}.
$$

We are mainly interested in the case when $m=2$, $V = \mbP^2$ is defined over $K$ and $\ch(K) >2$. In order to study $K$-points on $\Hilb^2\mbP^2$ it is convenient to use the height function 
\begin{equation}\label{eq:H}
H(z) = H_{\omega^{-1}_{\Hilb^2 \mbP^2} }(z)^{1/3}.
\end{equation}

\begin{theorem}\label{MainResult1} 
Let $K$ be a function field of degree $e$ over $\Fq(t)$ of characteristic $>2$. Let $Z_0 = \varepsilon^{-1} \pi (\mbP^2 \times \mbP^2 (K))$. Suppose $M \geq 2(2g_K-1)$. Then, for all non-empty open subsets $U$ of $\Hilb^2 \mbP^2$ we have 
\begin{align*}
\# \left\{z \in \left( Z_0 \setminus E(K)  \right) \cap U(K) : H (z) = q^M \right\} = \frac{S_{K}(3,1)^2}{2}  q^{3M} M + O\left(q^{3M}\right),
\end{align*} 
where $S_K(3,1)$ is given by \eqref{eq:SchanuelsConstantFctFields}, and 
\begin{align*}
\# \left\{z \in  \Hilb^2 \mbP^2(K) \setminus Z_0  : H (z) = q^M \right\} = S_{K}(3,1)^2  q^{3M} M + O\left(\sqrt{M}q^{3M}\right),
\end{align*}
as $M\to \infty$. In particular, in the second part, the leading constant agrees with the prediction of Peyre and the implicit constant in the error term depends only on $K$.
\end{theorem}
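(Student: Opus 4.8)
The plan is to translate the count on $\Hilb^2\mbP^2$, via the Hilbert--Chow morphism $\varepsilon$, into a count of $K$-rational and quadratic points on $\mbP^2$, and then to feed in the Thunder--Widmer and Kettlestrings--Thunder estimates \eqref{eq:TW}, \eqref{eq:TW2}, \eqref{KT}. The first step is to set up the geometric dictionary. Since $\ch(K)>2$, the morphism $\varepsilon$ is crepant (Beauville in characteristic $0$, Kumar--Thomsen in general), so $\omega^{-1}_{\Hilb^2\mbP^2}=\varepsilon^*\omega^{-1}_{\Sym^2\mbP^2}$ and $H=H_{\omega^{-1}_{\Sym^2\mbP^2}}^{1/3}\circ\varepsilon$. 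By the isomorphism \eqref{eq:isom}, each $z\in\Hilb^2\mbP^2(K)\setminus E(K)$ is $\varepsilon^{-1}(\pi(v_1,v_2))$ for a unique unordered pair of \emph{distinct} points $v_1,v_2\in\mbP^2(\overline K)$ whose union is $\Gal(\overline K/K)$-stable; one has $z\in Z_0$ exactly when $v_1,v_2\in\mbP^2(K)$, and $z\notin Z_0$ exactly when $\{v_1,v_2\}=\{x,\overline x\}$ is the Galois orbit of a point $x$ of degree $2$ over $K$. Moreover $E(K)\subseteq Z_0$, since $D(K)=\{\pi(v,v):v\in\mbP^2(K)\}$. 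Combining the definition of $H_{\omega^{-1}_{\Sym^2\mbP^2}}$ with $H_{\omega^{-1}_{\mbP^2}}(x)=H_2(x)^{3e}$ yields, for $z=\varepsilon^{-1}(\pi(v_1,v_2))$,
\begin{equation*}
H(z)=\bigl(H_2(v_1)\,H_2(v_2)\bigr)^{e}.
\end{equation*}
In the irreducible case $H_2(\overline x)=H_2(x)$, so $H(z)=H_2(x)^{2e}$ and $H(z)=q^M$ is equivalent to $H_2(x)=q^{M/(2e)}$; in the reducible case $H(z)=q^M$ is equivalent to $H_2(v_1)^eH_2(v_2)^e=q^M$, a product of two $K$-rational heights lying in $q^{\mbZ}$.

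For the second assertion, the dictionary makes $z\mapsto\{x,\overline x\}$ a bijection from $\{z\in\Hilb^2\mbP^2(K)\setminus Z_0:H(z)=q^M\}$ onto the set of Galois orbits of points $x\in\mbP^2(\overline{\Fq(t)})$ with $[K(x):K]=2$ and $H_2(x)=q^{M/(2e)}$. Each such orbit has two elements, so this count is $\tfrac12 N_K(3,2,M)$, and \eqref{KT} gives $S_K(3,1)^2q^{3M}M+O(\sqrt M\,q^{3M})$ with implied constant depending only on $K$. Thus I expect the second assertion to be essentially a restatement of Kettlestrings--Thunder once the geometric translation is carried out, with no genuine difficulty.

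For the first assertion, set $a(m)=N_K(3,1,m)=\#\{P\in\mbP^2(K):H_2(P)^e=q^m\}$. The dictionary identifies $\{z\in Z_0\setminus E(K):H(z)=q^M\}$ with the set of unordered pairs of distinct points of $\mbP^2(K)$ satisfying $H_2(v_1)^eH_2(v_2)^e=q^M$, so
\begin{equation*}
\#\{z\in Z_0\setminus E(K):H(z)=q^M\}=\frac12\Bigl(\sum_{m_1+m_2=M}a(m_1)a(m_2)-a(M/2)\Bigr),
\end{equation*}
the last term being present only when $M$ is even. I would then insert \eqref{eq:TW} on the range $m_1,m_2\geq 2g_K-1$ — non-empty precisely because $M\geq 2(2g_K-1)$ — and the cruder bound \eqref{eq:TW2} on the $O_K(1)$ remaining terms. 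Since the relative error in \eqref{eq:TW} is geometrically small in $\min(m_1,m_2)$, the cross terms and the tails sum to $O_K(q^{3M})$, whereas the main term is $S_K(3,1)^2q^{3M}\cdot\#\{m_1:2g_K-1\leq m_1\leq M-2g_K+1\}=S_K(3,1)^2q^{3M}(M+O_K(1))$; as $a(M/2)=O(q^{3M/2})$, halving gives $\tfrac12 S_K(3,1)^2q^{3M}M+O_K(q^{3M})$. Finally I must pass from $Z_0\setminus E(K)$ to its intersection with $U(K)$: the discarded points correspond under $\pi$ to pairs $(v_1,v_2)$ lying in a proper closed $\mfS_2$-invariant subvariety $W\subsetneq\mbP^2\times\mbP^2$ (the complement of $\pi^{-1}\varepsilon(U\setminus E)$), and I would bound $\#\{(v_1,v_2)\in W(K):H_2(v_1)^eH_2(v_2)^e=q^M\}$ componentwise: a component $\{P\}\times\mbP^2$ contributes $a(M-m_P)=O(q^{3M})$, while for a component surjecting onto one factor one fibres over that projection and combines $a(m_1)\ll q^{3m_1}$ with a bound $\ll q^{(2+\varepsilon)m_2}$ (or sharper) for the $K$-points of height $q^{m_2}$ on the fibre, whose total exponent is maximised at the endpoint and does not exceed $3M$; the saving comes from $\dim W<4$.

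The arithmetic inputs being off the shelf, the main obstacle is the bookkeeping in the first assertion. To reach the error $O(q^{3M})$ rather than $O(q^{3M}M)$ one really needs the strong form \eqref{eq:TW} of Thunder--Widmer, with its error that decays geometrically in $m$; and the uniformity in $U$ rests on a dimension-growth-type estimate for $K$-points of bounded height on subvarieties of $\mbP^2\times\mbP^2$, which is the delicate ingredient since a naive treatment leaks a factor $q^{\varepsilon M}$. The geometric preliminaries — crepancy of $\varepsilon$ for $\ch(K)>2$, the precise descriptions of $E(K)$ and $D(K)$, and the behaviour of the anticanonical height under $\varepsilon$ — are routine but must be handled carefully, since the identity $H(z)=(H_2(v_1)H_2(v_2))^e$ underpins both assertions.
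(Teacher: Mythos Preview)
Your proposal is correct and follows essentially the same route as the paper: translate via the Hilbert--Chow isomorphism \eqref{eq:isom} to $\Sym^2\mbP^2\setminus D$, identify $Z_0\setminus E$ with unordered distinct $K$-rational pairs and its complement with Galois orbits of quadratic points, then feed in \eqref{eq:TW}, \eqref{eq:TW2} for the convolution and \eqref{KT} for the irreducible part. The paper handles the closed-complement bound more bluntly than you do---it simply majorises by the $\mbP^1\times\mbP^2$ count rather than arguing component-by-component---and your worry about leaking $q^{\varepsilon M}$ is unfounded here, since the geometric decay $q^{-(1-\varepsilon)m_2}$ in your fibre argument already sums to $O(1)$. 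One omission: the statement includes the assertion that the leading constant matches Peyre's prediction, which the paper verifies separately via Corollary~\ref{Ex:Hilb2P2} and the height-zeta-function relation $Z^{\text{irr}}_X(3s)=\xcal Z^{\text{irr}}_X(s)$; you should at least flag that this check is needed.
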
 

Thus, after removing the thin set $Z_0$, Manin's conjecture holds for $\Hilb^2\mbP^2$. The proof can be found in Section \ref{S:Proof}. The first part relies on \eqref{eq:TW} and \eqref{eq:TW2}, which is where the condition that $M\geq 2(2g_K-1)$ comes from, and the second part uses \eqref{KT}. Theorem \ref{MainResult0} follows from taking \eqref{eq:H} in Theorem \ref{MainResult1}

This can be seen as a more general function field analogue of Le Rudulier's result \cite[Theorem 4.2]{Rudulier} which states that there exists a thin set $Z_0 \subset \Hilb^2\mbP^2(\mbQ)$ such that for all non-empty open subsets $U$ of $\Hilb^2 \mbP^2$ we have 
\begin{align*}
\# \left\{z \in \left( Z_0 \setminus E(\mbQ)  \right) \cap U(\mbQ) : H_{ \omega^{-1}_{\Hilb^2 \mbP^2}, \mbQ} (z) \leq B \right\} \sim \frac{8}{\zeta(3)^2} B \log B,
\end{align*}
and 
\begin{align*}
\# \left\{z \in  \Hilb^2 \mbP^2(\mbQ) \setminus Z_0  : H_{ \omega^{-1}_{\Hilb^2 \mbP^2}, \mbQ} (z) \leq B \right\} \sim \frac{24+2\pi^2}{3\zeta(3)^2} B \log B,
\end{align*}
as $B \to \infty$.

\section{Manin's conjecture and the Peyre constant}\label{S:Peyre's constant}

Given a Fano variety $V$ defined over a function field $K$, let 
$$
N_V(M) := \# \left\{P \in V(K) : \xcal{H}(P) =q^M\right\},
$$ 
where $\xcal{H}$ denotes the anticanonical height $H_{\omega^{-1}_V}$. Define the anticanonical height zeta function of $V$ to be
$$
Z_{\xcal{H}}(s) = \sum_{y  \in V(K)} \frac{1}{\xcal{H}(y)^s}.
$$
Then, by an application of the Wiener--Ikehara theorem  \cite[Theorem 17.4]{Rosen} we obtain what is know as Manin\rq{}s conjecture, that is 
\begin{align}\label{eq:ManinsConjecture}
N_V(M) \sim c_{\xcal{H}}(V) \frac{(\log q)^{r_V}}{(r_V-1)!} q^M M^{r_V-1} , 
\end{align}
as $M\to \infty$, where
\begin{align}\label{eq:PeyresConjecture}
c_{\xcal{H} }(V) = \lim_{s\to 1} (s-1)^{r_V} Z_{\xcal{H}} (s).
\end{align}
and the rank $r_V$ of the Picard group of $V$ is equal to the the multiplicity of the pole of $Z_{\xcal{H}}(s)$ of $V$ at $s=1$. The leading constant above has been given a geometric interpretation by Peyre which can be found in \cite{Peyre95, Bourqui02, Bourqui, Peyre12}. Let $S$ a finite subset of the set of places $\Omega_K$ of $K$ containing all ramified places and infinite places. Then, the Peyre constant with respect to the anticanonical height $\xcal{H}=H_{\omega^{-1}_V}$ is given by
\begin{equation}\label{eq:PeyresConstant}
c_{\xcal{H}}(V) = \alpha^*(V)\beta(V) \tau_{\xcal{H}}(V).
\end{equation}
The first two factors are geometric invariants and are independent of the height. We have
\begin{equation}\label{eq:alphaPeyre}
\alpha^*(V) = \int_{C_\text{eff}^{\vee}(V)} e^{ - \langle \omega_V^{-1}, y \rangle} \diff y,
\end{equation}
where $C_\text{eff} $ 
is the effective cone of $V$, 
\begin{equation}\label{eq:CeffPeyre}
C_\text{eff}^{\vee}(V) = \left \{y \in \Pic(V)_{\mbR}^{\vee} \mid \langle x, y \rangle  \geq 0 \text{ for all } x \in C_{\text{eff}}(V) \right\},
\end{equation}
and $\diff y$ is the normalised Lebesgue measure on $\Pic(V)^{\vee}$. The second invariant is
\begin{equation}\label{eq:betaPeyre}
\beta(V) = \# H^1(K, \Pic(V^s)),
\end{equation}
where $V^s$ is the separable closure of $V$. The third factor is given by 
\begin{equation}\label{eq:tauPeyre}
\tau_{\xcal{H}}(V) = \omega_{\xcal{H}}(\overline{V(K)}) = C_{K,V} l_S(V)  \prod_{v \in S} \omega_{V,v} \prod_{v \not \in S} \lambda^{-1}_v \omega_{V,v},
\end{equation}
where $\lambda_v = L_v(1, \Pic(V^s))$ and
$$
C_{K,V} = \begin{cases}
\Delta_K^{-\dim(V)/2}, &\text{ if $K$ is a number field,}\\
q^{(1-g_K)\dim(V)}, &\text{ if $K$ is a function field},
\end{cases}
$$
and
$$
l_S(V) = \lim_{s\to 1} (s-1)^{\rk(\Pic(V^s))} \prod_{v \not\in S} L_v(s, \Pic(V^s)).
$$
From now on, let $K$ be a function field of degree $e$ over $\Fq(t)$. Given a place $v \in \Omega_K$, let $\kappa_v$ be the residue field with respect to $v$. If $v \not\in S$ and $\xcal{V}$ is a smooth projective model of $V$, then
$$
\omega_{V,v} = \frac{\#\xcal{V}(\kappa_v)}{(\#k_v)^{\dim(V)}}. 
$$

\begin{example}\label{Ex:Pn}
As in \cite[Exemple 1.31]{Rudulier}, in the case when $V= \mbP^n$ is defined over $K$, we have $\Pic(\mbP^n) \cong \mbZ$ and $\omega^{-1}_{\mbP^n} = \xcal{O}_{\mbP^n}(n+1)$. Thus, the first two factors are 
$$
\alpha^*(V) = \frac{1}{n+1}
\qquad
\text{and} 
\qquad
\beta(V) =1.
$$
Moreover, we can take $S = \emptyset$. We have $\#\kappa_v = q_v$ and
$$
\prod_{v \not\in S} L_v(s, \Pic(V^s)) = \prod_{v \in \Omega_K} \frac{1}{1- q_v^{-s}} = \zeta_K(s).
$$
Hence, we obtain that $l_S(\mbP^n)$ is equal to 
\begin{equation}\label{eq:LimZetaK(s)}
 	  \lim_{s\to 1}(s-1) L_K(q^{-s}) \zeta_{\Fq(t)}(s)
	= L_K(q^{-1})  \lim_{s\to 1}(s-1) \zeta_{\Fq(t)}(s) 
	=  \frac{J_K q^{1-g_K}}{(q-1) \log q},
\end{equation}
where $L_K$ is called the $L$-polynomial of $K/ \Fq$ and its value at $q^{-1}$ follows from \cite[Theorem 5.1.15]{Stichtenoth}, $g_K$ is the genus of $K$ and $J_K$ is the number of divisor classes of degree 0 (cardinality of the Jacobian) of $K$. Moreover, 
$$
\omega_{\mbP^n,v} = \frac{(q_v^{n+1} - 1)/(q_v-1)}{q_v^n} =  \frac{1-q_v^{1-n}}{1-q_v^{-1}},
$$
and the convergence factors are given by $\lambda_v^{-1} = L_v(1,\Pic(V^s))^{-1} = 1-q_v^{-1}$. 
Then 
$$\prod_{v \in \Omega_K} \lambda_v^{-1} \omega_v (V(K_v)) = \prod_{v \in \Omega_K} \left(1- q_v^{-1-n}\right)= \zeta_K(n+1)^{-1}.$$
Putting all this together, we obtain
$$
c_{\xcal{H}}\left(\mbP^n\right) =   \frac{S_K(n+1,1)}{\log q^{n+1}}, 
$$ 
where $S_K(n+1,1)$ is the function field version of Schanuel's constant in \eqref{eq:SchanuelsConstant}.
\end{example}

\begin{lemma}\label{Ex:HilbmP2}
Let $m \in \mbZ_{\geq 2}$ and $K$ be a degree $e \geq 1$ extension of $\Fq(t)$ such that $\ch(K)>m$. Then, Peyre\rq{}s constant for $\Hilb^m\mbP^2$ defined over $K$ is given by
$$
c_{\xcal{H}}(\Hilb^m\mbP^2)  = \frac{\mu J_K^2  }{9(q-1)^2 q^{2(m+1)(g_K-1)} (\log q )^2} \prod_{v \in \Omega_K} \left(1-q_v^{-1} \right)^2 \frac{\left | \Hilb^m\mbP^2(\mathbb{F}_{q_v}) \right |}{q^{2m}},
$$
where $g_K$ is the genus of $K$, $J_K$ is the number of divisor classes of degree $0$ which is the cardinality of the Jacobian of $K$, $\zeta_K$ is the zeta function of $K$ and $\mu$ is as in \eqref{eq:mu}.
\end{lemma}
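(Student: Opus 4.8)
The strategy is to compute each of the three factors in Peyre's formula \eqref{eq:PeyresConstant} for $V = \Hilb^m\mbP^2$ and multiply them together. The first input is the geometry recalled in Section \ref{S:Geometry of the Hilbert scheme}: we have $\omega^{-1}_{\Hilb^m\mbP^2} = 3H$, the Picard group \eqref{eq:PicHilbm} has rank $2$, and the effective cone is spanned by $\mu H - \tfrac{E}{2}$ and $E$ as in \eqref{eq:mu}. I would first compute $\alpha^*(\Hilb^m\mbP^2)$ from \eqref{eq:alphaPeyre}: choosing the basis $H$, $\tfrac{E}{2}$ of $\Pic$ and the dual basis of $\Pic^\vee$, the dual cone $C_{\text{eff}}^\vee$ is the cone generated by the rays dual to the two extremal effective divisors, and $\langle \omega_V^{-1}, \cdot\rangle = \langle 3H, \cdot\rangle$ is linear on it; the integral $\int_{C_{\text{eff}}^\vee} e^{-\langle 3H, y\rangle}\,dy$ over this $2$-dimensional simplicial cone is an elementary exponential integral whose value I expect to be $\tfrac{\mu}{9}$ (this is where the $\mu$ and the $9 = 3^2$ enter). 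Since $\Pic(\Hilb^m\mbP^2)$ over the separable closure is already of rank $2$ with trivial Galois action (it is generated by the geometrically defined classes $H$ and $E/2$), we get $\beta(\Hilb^m\mbP^2) = \#H^1(K,\Pic(V^s)) = 1$.

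Next I would handle the Tamagawa-type factor $\tau_{\xcal{H}}(V)$ from \eqref{eq:tauPeyre}. Here $\dim V = 2m$, so $C_{K,V} = q^{(1-g_K)\cdot 2m} = q^{-2m(g_K-1)}$. The local $L$-factors: since $\Pic(V^s) \cong \mbZ^2$ with trivial Galois action, $L_v(s,\Pic(V^s)) = (1-q_v^{-s})^{-2}$, so $\prod_{v\notin S} L_v(s,\Pic(V^s)) = \zeta_K(s)^2$, and as in \eqref{eq:LimZetaK(s)} but squared, $l_S(V) = \lim_{s\to 1}(s-1)^2\zeta_K(s)^2 = \bigl(\tfrac{J_K q^{1-g_K}}{(q-1)\log q}\bigr)^2$. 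The convergence factors are $\lambda_v^{-1} = L_v(1,\Pic(V^s))^{-1} = (1-q_v^{-1})^2$, and the local densities are $\omega_{V,v} = \#\Hilb^m\mbP^2(\mbF_{q_v})/q_v^{2m}$ — using that $\Hilb^m\mbP^2$ has good reduction everywhere (taking $S=\emptyset$ is legitimate since $\mbP^n$, hence the Hilbert scheme, is defined over $\Fq(t)$ with everywhere good reduction and $\ch K > m$ ensures the resolution behaves well, cf. \cite{KuTh}). Assembling, $\tau_{\xcal{H}}(V) = q^{-2m(g_K-1)} \bigl(\tfrac{J_K q^{1-g_K}}{(q-1)\log q}\bigr)^2 \prod_v (1-q_v^{-1})^2 \tfrac{\#\Hilb^m\mbP^2(\mbF_{q_v})}{q_v^{2m}}$; collecting the powers of $q^{1-g_K}$ gives $q^{-2(m+1)(g_K-1)}$ as in the claimed formula.

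Finally I would multiply $c_{\xcal{H}}(\Hilb^m\mbP^2) = \alpha^*\cdot\beta\cdot\tau_{\xcal{H}} = \tfrac{\mu}{9}\cdot 1\cdot \tau_{\xcal{H}}(V)$ and read off exactly the displayed expression $\tfrac{\mu J_K^2}{9(q-1)^2 q^{2(m+1)(g_K-1)}(\log q)^2}\prod_{v\in\Omega_K}(1-q_v^{-1})^2\,\tfrac{|\Hilb^m\mbP^2(\mbF_{q_v})|}{q^{2m}}$. The only genuinely nontrivial step is the evaluation of $\alpha^*$: one must pin down the correct normalization of Lebesgue measure on $\Pic(V)^\vee$ (the lattice dual to $\Pic(V) = \mbZ H \oplus \mbZ\tfrac{E}{2}$, not to $\mbZ H \oplus \mbZ E$), verify that $\mu H - \tfrac E2$ and $E$ really generate the dual cone's relevant rays, and carry out the two-dimensional integral; the exponential integral over a simplicial cone is standard once the rays and the linear functional $3H$ are expressed in the chosen coordinates, and the factor $1/9$ is forced by $\langle 3H,\cdot\rangle$ appearing in the exponent of a two-dimensional integral. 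Everything else is a bookkeeping exercise parallel to Example \ref{Ex:Pn}, with $\zeta_K$ replaced by $\zeta_K^2$ and the dimension $n$ replaced by $2m$.
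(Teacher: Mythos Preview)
Your proposal is correct and follows essentially the same route as the paper's proof: compute $\alpha^* = \mu/9$ from the dual of the effective cone and the pairing with $3H$, take $\beta = 1$, and assemble $\tau_{\xcal{H}}$ from $C_{K,V} = q^{2m(1-g_K)}$, $l_S(V) = \bigl(\lim_{s\to 1}(s-1)\zeta_K(s)\bigr)^2$, convergence factors $(1-q_v^{-1})^2$, and local densities $|\Hilb^m\mbP^2(\mbF_{q_v})|/q_v^{2m}$. The one point the paper makes explicit that you leave implicit is the convergence of the Euler product $\prod_v \lambda_v^{-1}\omega_v$, which it justifies via the Ellingsrud--Str{\o}mme asymptotic $|\Hilb^m\mbP^2(\mbF_q)| = q^{2m} + 2q^{2m-1} + O(q^{2m-2})$.
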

\begin{proof}
By \eqref{eq:PicHilbm}, \eqref{eq:mu}, \eqref{eq:CeffPeyre}, and since the anticanonical divisor is $\omega^{-1}_{\Hilb^m\mbP^2}=3H$, we have that
$$
C_\text{eff}^{\vee}(\Hilb^m\mbP^2) = \left \{aH +\frac{b}{\mu}E, \left(a,\frac{b}{\mu}\right) \in \mbR^2 : a-\frac{b}{\mu}\geq 0,  \frac{b}{\mu} \geq 0 \right\},
$$
Then, by \eqref{eq:alphaPeyre}, and since $\mu \in \mbR_{>0}$, we obtain
$$
\alpha^*(\Hilb^m\mbP^2) = \int_{C_\text{eff}^{\vee}(\Hilb^m\mbP^2)} e^{ - \langle 3H, y \rangle} \diff y = \int_0^{\infty} \diff b \int_{\frac{b}{\mu}}^{\infty} e^{-3a} \diff a = \frac{\mu}{3^2}.
$$
By \eqref{eq:betaPeyre}, we have $\beta(\Hilb^m\mbP^2) = 1$. Since $\rk \Pic \Hilb^m \mbP^2 = 2$ and $\dim \Hilb^m \mbP^2 = 2m$, and by \eqref{eq:tauPeyre}, then 
$$
\tau_{\xcal{H}}(\Hilb^m\mbP^2) = q^{2m(1-g_K)} \lim_{s \to 1} (s-1)^2 \zeta_K(s)^2 \prod_{v \in \Omega_K} \left(1-q_v^{-1} \right)^2 \omega_v (\Hilb^m\mbP^2),
$$
where
$$
\omega_v (\Hilb^m\mbP^2) = \frac{ | \Hilb^m\mbP^2(\mathbb{F}_{q_v}) |}{{q_v}^{2m}}.
$$ 
As in \eqref{eq:LimZetaK(s)} in Example \ref{Ex:Pn}, we have that
$$
\lim_{s\to 1} (s-1)^2 \zeta_K(s)^2 = \left( \lim_{s\to 1} L_K(q^{-s}) \zeta_{\Fq(t)}(s) \right)^2 = \left(  \frac{J_K q^{1-g_K}}{(q-1) \log q} \right)^2.
$$
Thus, to compute $\tau_{\xcal{H}}(\Hilb^m\mbP^2)$ it is enough to know $| \Hilb^m\mbP^2(\mathbb{F}_{q})|$. A formula for this, involving Betti numbers, has been first given by Ellingsrud--Str{\o}mme \cite[Theorem 1.1]{ES}. However, it can be computed using a more general result of G\"ottsche \cite[Lemma 2.3.9]{G} which implies that 
$$
\sum_{m=0}^{\infty} \left |  \Hilb^m\mbP^2(\mathbb{F}_{q}) \right | t^m = \exp \left( \sum_{k=1}^{\infty} \frac{t^k}{k} \frac{ \left | \mbP^2 (\mathbb{F}_{q^k}) \right | }{1-q^k t^k}  \right).
$$
By \cite[Corollary 1.3]{ES}, we have that for $m \geq 2$, $\left |  \Hilb^m\mbP^2(\mathbb{F}_{q}) \right | = q^{2m} + 2q^{2m-1} + O\left(q^{2m-2} \right)$. It follows that $\omega_v (\Hilb^m\mbP^2)  = 1 + 2q_v^{-1} + O\left( q_v^{-2} \right)$, which is what we expect by Manin\rq{}s conjecture; hence, the product $\prod_v \lambda_v^{-1} \omega_v$, where $\lambda_v^{-1} = (1 - q_v^{-1})^2$, converges. We remark that, by the isomorphism \eqref{eq:isom}, we have
$$
\left | \Hilb^m\mbP^2(\Fq) \right | = \left | \Sym^m\mbP^2(\Fq) \right | -  \left | D(\Fq) \right | + \left | E(\Fq) \right |.
$$ 
Hence, we could also compute $\left |  \Hilb^m\mbP^2(\mathbb{F}_{q}) \right |$ by using Lemma \ref{L:Chen7}, if we understood the geometry of the exceptional divisor $E$, which has been studied independently by Iarrobino \cite{I} and Brian\c{c}on \cite{Briancon}. One can find an explicit description for the cases when $1 \leq m \leq 6$ in \cite[\S IV.2]{Briancon}. 
However, for $m> 2$, we could not find a nice interpretation for $\prod_{v \in \Omega_K} \left(1-q_v^{-1} \right)^2 \omega_v (\Hilb^m\mbP^2)$.
\end{proof}

\begin{corollary}\label{Ex:Hilb2P2}
Let $K$ be a degree $e \geq 1$ extension of $\Fq(t)$ such that $\ch(K)>2$. Then Peyre\rq{}s constant for $\Hilb^2\mbP^2$ defined over $K$ is given by
$$
c_{\xcal{H}}(\Hilb^2\mbP^2)  = \frac{S_K(3,1)^2}{9\left(\log q\right)^2},
$$
where $S_K(3,1)$ is given by \eqref{eq:SchanuelsConstantFctFields}.
\end{corollary}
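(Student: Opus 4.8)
The plan is to specialise Lemma \ref{Ex:HilbmP2} to the case $m = 2$, where the abstract Euler product appearing there evaluates in closed form. First I would record that $\mu = 1$ for $\Hilb^2\mbP^2$: this is the entry $m = 2$ of Huizenga's table \cite[Table 1]{Huizenga}. (Geometrically this reflects the fact that every length-$2$ subscheme of $\mbP^2$ spans a unique line, so that $\Hilb^2\mbP^2$ is a $\mbP^2$-bundle over the dual plane $(\mbP^2)^\vee$ via the spanning-line map, and $H-\frac{E}{2}$, which restricts to $0$ on the fibres, is the pullback of a hyperplane class and therefore spans an extremal ray of the effective cone.) Putting $m = 2$ and $\mu = 1$ into Lemma \ref{Ex:HilbmP2} leaves
\[ c_{\xcal{H}}(\Hilb^2\mbP^2) = \frac{J_K^2}{9(q-1)^2 q^{6(g_K-1)}(\log q)^2}\,\prod_{v\in\Omega_K}\left(1-q_v^{-1}\right)^2\frac{\left|\Hilb^2\mbP^2(\mathbb{F}_{q_v})\right|}{q_v^{4}}, \]
so everything comes down to evaluating the product over $v$.

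The computation that carries the content is the point count $\left|\Hilb^2\mbP^2(\mathbb{F}_q)\right| = (q^2+q+1)^2$. I would obtain this either by extracting the coefficient of $t^2$ in G\"ottsche's generating function recorded in the proof of Lemma \ref{Ex:HilbmP2}, or, more transparently, from the stratification $\left|\Hilb^2\mbP^2(\mathbb{F}_q)\right| = \left|\Sym^2\mbP^2(\mathbb{F}_q)\right| - \left|D(\mathbb{F}_q)\right| + \left|E(\mathbb{F}_q)\right|$ afforded by \eqref{eq:isom}, using that the support map identifies $D$ with $\mbP^2$ and exhibits $E$ as a $\mbP^1$-bundle over it, together with the elementary counts of $\mathbb{F}_q$-points and of closed points of degree $2$ on $\mbP^2$. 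Given this, the local factor at $v$ telescopes:
\[ \left(1-q_v^{-1}\right)^2\frac{(q_v^2+q_v+1)^2}{q_v^4} = \left(\frac{(q_v-1)(q_v^2+q_v+1)}{q_v^{3}}\right)^2 = \left(1-q_v^{-3}\right)^2, \]
and hence $\prod_{v\in\Omega_K}\left(1-q_v^{-1}\right)^2 q_v^{-4}\left|\Hilb^2\mbP^2(\mathbb{F}_{q_v})\right| = \prod_{v\in\Omega_K}\left(1-q_v^{-3}\right)^2 = \zeta_K(3)^{-2}$.

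It then remains to identify the result. Feeding the value $\zeta_K(3)^{-2}$ back into the displayed expression gives
\[ c_{\xcal{H}}(\Hilb^2\mbP^2) = \frac{1}{9(\log q)^2}\left(\frac{J_K}{(q-1)\,q^{3(g_K-1)}\,\zeta_K(3)}\right)^2, \]
and the bracketed quantity is exactly $S_K(3,1)$ by \eqref{eq:SchanuelsConstantFctFields} with $n+1 = 3$, which yields the claimed formula. I expect the only place requiring care is the point count $\left|\Hilb^2\mbP^2(\mathbb{F}_q)\right| = (q^2+q+1)^2$ and, in particular, correctly balancing the contributions of the diagonal $D$ and the exceptional divisor $E$; its clean factorisation is precisely what makes the generic Euler product of Lemma \ref{Ex:HilbmP2} collapse to a value of $\zeta_K$, so I would cross-check it against the G\"ottsche computation.
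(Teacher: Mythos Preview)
Your proposal is correct and follows essentially the same route as the paper: specialise Lemma \ref{Ex:HilbmP2} to $m=2$, read off $\mu=1$ from Huizenga's table, compute the local point count, observe that $(1-q_v^{-1})^2\omega_v(\Hilb^2\mbP^2)=(1-q_v^{-3})^2$ so the Euler product is $\zeta_K(3)^{-2}$, and identify the result with $S_K(3,1)^2/(9(\log q)^2)$. The only cosmetic difference is that the paper quotes the point count in expanded form $1+2q_v^{-1}+3q_v^{-2}+2q_v^{-3}+q_v^{-4}$ from \cite[Table 1]{ES}, whereas you present it in factored form $(q_v^2+q_v+1)^2$ via G\"ottsche or the stratification; these are the same polynomial, and your factored form makes the telescoping to $(1-q_v^{-3})^2$ a touch more transparent.
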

\begin{proof} 
This follows by taking $m=2$ in Lemma \ref{Ex:HilbmP2}. By \cite[Table 1]{Huizenga}, we have that $\mu = 1$. Moreover, by \cite[Table 1]{ES}, we obtain that
$$
\omega_v (\Hilb^2\mbP^2) = \frac{\# \Hilb^2\mbP^2(\mathbb{F}_{q_v})}{q_v^4}=1 + 2q_v^{-1} + 3q_v^{-2} + 2q_v^{-3} + q_v^{-4},
$$
for all $v \in \Omega_K$. Thus, taking $\lambda_v^{-1} = (1 - q_v^{-1})^2 $, we obtain $\prod_v \lambda_v^{-1} \omega_v = \zeta_K(3)^{-2}$. Hence,
$$
c_{\xcal{H}}(\Hilb^2\mbP^2)  = \frac{J_K^2}{9(q-1)^2 q^{6(g_K-1)} (\log q )^2 \zeta_K(3)^2},
$$
and the result follows by \eqref{eq:SchanuelsConstantFctFields}.
\end{proof}

Similar to the case of number fields, Manin's conjecture and Peyre's prediction are compatible with products of varieties over function fields. This will be needed in the proofs of the results in Section \ref{S:ZeroCycles}.

\begin{theorem}\label{thm:products}
Let $V$, $W$ be two Fano varieties defined over a function field $K$ such that $V \times W(K) \neq \emptyset$ and $r_V \geq r_W$. Assume that \eqref{eq:ManinsConjecture} and \eqref{eq:PeyresConjecture} hold for $V$ and $W$. Then, we have that $N_{V\times W}(M) $ is
$$
\sim c_{H_{\omega_{V\times W}^{-1}}}(V\times W) \frac{(\log q)^{r_{V\times W}}}{(r_{V\times W}-1)!} q^M M^{r_{V\times W}-1}, 
$$
as $M\to \infty$, where
$$
c_{H_{ \omega_{V\times W}^{-1 } } }(V\times W) = \lim_{s\to 1} (s-1)^{r_{V\times W}}Z_{H_{\omega_{V\times W}^{-1}}} (s)
$$
agrees with the constant predicted by Peyre and the rank $r_{V\times W}$ of the Picard group of $V\times W$ is equal to the the multiplicity of the pole of the anticanonical height zeta function $Z_{H_{\omega_{V\times W}^{-1}}}(s)$ at $s=1$.
\end{theorem}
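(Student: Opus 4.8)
The plan is to derive everything from two facts: the anticanonical height of a product is the product of the anticanonical heights, and every ingredient of Peyre's formula \eqref{eq:PeyresConstant} is multiplicative in the two factors. (The hypothesis $r_V\geq r_W$ plays no essential role, the statement being symmetric in $V$ and $W$.) First I would fix the geometry. Since $\omega_{V\times W}\cong p_1^{*}\omega_V\otimes p_2^{*}\omega_W$, I equip $\omega^{-1}_{V\times W}$ with the tensor product of the pull-backs of fixed adelic metrics on $\omega^{-1}_V$ and $\omega^{-1}_W$; this is an admissible metric, and since the counting function and Peyre's constant are independent of the choice of metric, harmless. With it $H_{\omega^{-1}_{V\times W}}(v,w)=H_{\omega^{-1}_V}(v)\,H_{\omega^{-1}_W}(w)$, so, writing $Z_V,Z_W,Z_{V\times W}$ for the three height zeta functions,
\[
N_{V\times W}(M)=\sum_{a+b=M}N_V(a)\,N_W(b),\qquad Z_{V\times W}(s)=Z_V(s)\,Z_W(s),
\]
the latter in the common half-plane of absolute convergence. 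Also $V\times W(K)\neq\emptyset$ forces $V(K),W(K)\neq\emptyset$, and since $V$, $W$ are Fano (hence have trivial Albanese and $\Pic^{0}$, so the Hom-term in the Picard group of a product vanishes) one has $\Pic\big((V\times W)^{s}\big)\cong\Pic(V^{s})\oplus\Pic(W^{s})$ as $\Gal(\overline{K}/K)$-modules; hence $r_{V\times W}=r_V+r_W$, and likewise $C_\text{eff}^{\vee}(V\times W)=C_\text{eff}^{\vee}(V)\times C_\text{eff}^{\vee}(W)$ (pull-backs of effective divisors are effective, and restriction to a general fibre gives the reverse inclusion of the dual cones).

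Next I would obtain the asymptotic and compute the constant. From $Z_{V\times W}=Z_V Z_W$ and \eqref{eq:PeyresConjecture} for $V$ and $W$, the function $Z_{V\times W}(s)$ has a pole at $s=1$ of order exactly $r_V+r_W=r_{V\times W}$ --- no cancellation, as $c_{\xcal{H}}(V),c_{\xcal{H}}(W)>0$ --- with $\lim_{s\to1}(s-1)^{r_{V\times W}}Z_{V\times W}(s)=c_{\xcal{H}}(V)c_{\xcal{H}}(W)$. Feeding the asymptotics of \eqref{eq:ManinsConjecture} for $N_V$ and $N_W$ into the convolution above, a routine computation --- discard the small-index tails $a<a_0$, $b<b_0$, which contribute $O\!\big(q^{M}M^{\max(r_V,r_W)-1}\big)=o\!\big(q^{M}M^{r_{V\times W}-1}\big)$ since $r_V,r_W\geq1$, and recognise a Riemann sum for the Beta integral $\int_0^1 x^{r_V-1}(1-x)^{r_W-1}\diff x=\tfrac{(r_V-1)!(r_W-1)!}{(r_V+r_W-1)!}$ --- yields $N_{V\times W}(M)\sim c_{\xcal{H}}(V)c_{\xcal{H}}(W)\tfrac{(\log q)^{r_V+r_W}}{(r_V+r_W-1)!}q^{M}M^{r_V+r_W-1}$, the factorials from the Beta integral exactly absorbing those in the two single-variety leading terms. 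Thus \eqref{eq:ManinsConjecture} holds for $V\times W$ with $c_{\xcal{H}}(V\times W)=c_{\xcal{H}}(V)c_{\xcal{H}}(W)$, consistent with the pole computation. (Alternatively, granting the meromorphic continuation of $Z_V$ and $Z_W$ across $\Re s=1$, which is part of the Manin--Peyre input, the asymptotic follows directly from the Wiener--Ikehara theorem \cite[Theorem 17.4]{Rosen} applied to $Z_{V\times W}$.)

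Finally I would check the agreement with Peyre, i.e.\ that $\alpha^{*}$, $\beta$ and $\tau_{\xcal{H}}$ are each multiplicative. Under $\Pic(V\times W)_{\mbR}^{\vee}=\Pic(V)_{\mbR}^{\vee}\oplus\Pic(W)_{\mbR}^{\vee}$, with the normalised Lebesgue measure equal to the product of the two, $\langle\omega^{-1}_{V\times W},(y_1,y_2)\rangle=\langle\omega^{-1}_V,y_1\rangle+\langle\omega^{-1}_W,y_2\rangle$, which together with the factorisation of $C_\text{eff}^{\vee}$ makes \eqref{eq:alphaPeyre} split, so $\alpha^{*}(V\times W)=\alpha^{*}(V)\alpha^{*}(W)$. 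The Galois-module decomposition gives $H^{1}\big(K,\Pic((V\times W)^{s})\big)=H^{1}(K,\Pic(V^{s}))\times H^{1}(K,\Pic(W^{s}))$, so $\beta(V\times W)=\beta(V)\beta(W)$. Taking $S=S_V\cup S_W$ and the model $\xcal{V}\times\xcal{W}$: for every $v$ one has $\omega_{V\times W,v}=\omega_{V,v}\,\omega_{W,v}$, while $L_v(s,\Pic((V\times W)^{s}))=L_v(s,\Pic(V^{s}))L_v(s,\Pic(W^{s}))$ makes the convergence factors $\lambda_v$ and the limit $l_S$ multiply, and $C_{K,V\times W}=q^{(1-g_K)\dim(V\times W)}=C_{K,V}C_{K,W}$; hence \eqref{eq:tauPeyre} splits and $\tau_{\xcal{H}}(V\times W)=\tau_{\xcal{H}}(V)\tau_{\xcal{H}}(W)$. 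Combining, Peyre's constant for $V\times W$ is the product of Peyre's constants for $V$ and $W$, which by hypothesis equals $c_{\xcal{H}}(V)c_{\xcal{H}}(W)=c_{\xcal{H}}(V\times W)$.

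The main obstacle is not conceptual but a matter of care: choosing the adelic metric on $\omega^{-1}_{V\times W}$ concretely enough that the height is genuinely multiplicative (so that the convolution and the identity $Z_{V\times W}=Z_V Z_W$ are exact), controlling the tail ranges of the convolution where the single-variety asymptotics are not yet in force, and --- the most delicate point --- verifying that the Tamagawa measure $\tau_{\xcal{H}}$ really factors, that is, that one may work with a common finite set $S$ and compatible convergence factors so that each of the Euler products and the limit $l_S$ splits as a product over the two factors.
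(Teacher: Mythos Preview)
Your proposal is correct and follows essentially the same route as the paper: height multiplicativity on the product, the convolution identity $N_{V\times W}(M)=\sum_{a+b=M}N_V(a)N_W(b)$ evaluated via the Beta integral (the paper phrases this as ``replacing the sum over $i$ by an integral''), the factorisation $Z_{V\times W}=Z_VZ_W$, and then the multiplicativity of $\alpha^{*}$, $\beta$, $\tau_{\xcal{H}}$. The only cosmetic difference is that the paper outsources the multiplicativity of $\alpha^{*}$ and $\beta$ to \cite[Lemma~3.0.2]{Peyre95} whereas you spell out the dual-cone and $H^{1}$ arguments directly.
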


\begin{proof}
As in the proof of \cite[Lemma 3.0.2]{Peyre95}, we have an isomorphism
\begin{equation}\label{eq:PicProduct}
\Pic V \times \Pic W \to \Pic(V \times W).
\end{equation}
Thus, $r_{V \times W} = r_V + r_W$ and the metric on the anticanonical divisor $\omega^{-1}_{V \times W}$ is the product of the metrics on $\omega^{-1}_{V}$ and $\omega^{-1}_{W}$. Hence, the height of a point $(x,y) \in V\times W (K)$ is given by $H_{\omega_{V\times W}^{-1}}(x,y) =  H_{\omega_{V}^{-1}}(x)H_{\omega_{W}^{-1}}(y)$. This implies that we can write $N_{V\times W}(M)$ as
\begin{align}\label{eq:MT}
& \sum_{i=0}^M \# \left\{x \in V(K) : H_{\omega_{V}^{-1}}(x) =q^i\right\} \# \left\{y \in W(K) : H_{\omega_{W}^{-1}}(y) =q^{M-i}\right\} \\
	\sim & \frac{c_{H_{\omega_{V}^{-1}}}(V) c_{H_{\omega_{W}^{-1}}}(W)}{(r_V-1)!(r_W-1)!} (\log q)^{r_V+r_W} q^M \sum_{i=0}^M  i^{r_V-1} (M-i)^{r_W-1},\nonumber
\end{align} 
as $M\to \infty$. Replacing the sum over $i$ by an integral, we obtain that the above is 
$$ 
 \sim \frac{c_{H_{\omega_{V}^{-1}}}(V) c_{H_{\omega_{W}^{-1}}}(W)}{(r_V+r_W-1)!} (\log q)^{r_V+r_W} q^M M^{r_V+r_W-1}.
$$
Noting that $Z_{H_{\omega_{V\times W}^{-1}}} (s) = Z_{H_{\omega_{V}^{-1}}} (s)  Z_{H_{\omega_{W}^{-1}}} (s)$ together with the fact that $V$ and $W$ satisfy \eqref{eq:PeyresConjecture}, we get the expected main term in the asymptotic formula for $N_{V\times W}(M) $ as $M\to \infty$. 

It is left to show that the obtained constant agrees with the definition given in Section \ref{S:Peyre's constant}. By \cite[Lemma 3.0.2]{Peyre95}, we have 
$ \alpha^*(V )  \alpha^*(W )= \alpha^*(V \times W)$ and $\beta(V)\beta(W)=\beta(V\times W)$. 
This also holds in the case of function fields because $\alpha^*$ and $\beta$ are geometric invariants. Now let $S$ be a finite subset of the set of places $\Omega_K$ of $K$ containing all ramified places and the infinite place. By \eqref{eq:PicProduct}, we have that $L_v(1,\Pic V^s) L_v(1,\Pic W^s) = L_v(1,\Pic (V\times W)^s)$ for all $v \in \Omega_K$. We also note that since $V$ and $W$ are projective, we have that $\dim(V\times W) = \dim(V)+\dim(W)$. Thus, $\tau_{H_{\omega^{-1}_{V}}}(V)\tau_{H_{\omega^{-1}_{W}}}(W) $ is equal to
\begin{align*} 
& q^{(1-g_K)(\dim (V \times W))} \prod_{v \in S}\frac{\#\xcal{V}(\kappa_v)\#\xcal{W}(\kappa_v)}{(\#k_v)^{\dim(V\times W)}} \prod_{v \not\in S} \frac{\#\xcal{V}(\kappa_v)\#\xcal{W}(\kappa_v)}{L_v(1,\Pic (V\times W)^s)(\#k_v)^{\dim(V \times W)}},
\end{align*}
 which is exactly $\tau_{H_{\omega^{-1}_{V\times W}}}(V\times W)$. Hence,
$$
c_{H_{\omega_{V}^{-1}}}(V) c_{H_{\omega_{W}^{-1}}}(W) = \alpha^*(V\times W ) \tau_{H_{\omega^{-1}_{V\times W}}}(V\times W)  =  c_{H_{\omega_{V\times W}^{-1}}}(V \times W),
$$
as claimed.
\end{proof}

\section{Proof of the main theorem}\label{S:Proof}

In this section we prove Theorem \ref{MainResult1}. Recall that $[K:\Fq(t)]=e$, $\ch(K)>2$ and $Z_0=\varepsilon^{-1}\pi(\mbP^2 \times \mbP^2(K))$. 
For the proof of the first part, define the height function
$$
H_{{\mbP^2 \times \mbP^2}} : (x_1, x_2) \to H_2(x_1)^eH_2(x_2)^e.
$$
Then, as in \cite[Proposition 3.14]{Rudulier}, $\# \left\{z \in \left( Z_0 \setminus E(K)  \right) \cap U(K) : H (z) = q^M \right\}$ is equal to
\begin{align}
& \# \left\{z \in \varepsilon^{-1}\pi  \left( \mbP^2 \times \mbP^2 \right) (K) \cap (\Hilb^2\mbP^2-E)(K) \cap U(K) : H (z) = q^M \right\} \nonumber\\
= & \# \left\{z \in V_r \cap \varepsilon((\Hilb^2\mbP^2-E) \cap U) (K) : H (z) = q^M \right\}\nonumber \\
= & \frac{1}{2} \# \left\{(x,y) \in \left( \mbP^2 \times \mbP^2 \right)  (K) \cap \pi^{-1}\varepsilon((\Hilb^2\mbP^2-E) \cap U) (K) : H_{\mbP^2 \times \mbP^2} (x,y) = q^M \right\}, \nonumber
\end{align}
where $V_r = \pi \left( \mbP^2 \times \mbP^2 (K)\right)$ is the set of reducible points of $\Sym^2\mbP^2(K)$. Remark that $ \frac{1}{2} \# \left\{(x,y) \in \left(  \mbP^2 \times \mbP^2 \right) (K)  : H_{\mbP^2 \times \mbP^2} (x,y) = q^M \right\}$ can be written as
\begin{align}\label{eq:part1}
 & \frac{1}{2} \sum_{N=0}^M \# \left\{x \in \mbP^2 (K)  : H_2 (x) = q^{\frac{N}{e}} \right\}\# \left\{y \in \mbP^2 (K) : H_2 (y) = q^{\frac{M-N}{e}} \right\}. 
\end{align}
If $e=1$, then $K=\Fq(t)$ and 
$$
S_{\Fq(t)}(3,1) = \frac{(q^3-1)(1-q^{-2})}{q-1},
$$
by \eqref{eq:SchanuelsConstantFctFields}. Noting that
$$
\# \left\{ P \in \mbP^{n}(\Fq(t)) :  H_n(P) = q^M \right\}  = S_{\Fq(t)}(n+1,1) q^{(n+1)M},
$$
we have that \eqref{eq:part1} is equal to
\begin{align*}
& \frac{1}{2} \left( \frac{2(q^3-1)}{q-1}S_{\Fq(t)}(3,1)q^{3M} + \sum_{N=1}^{M-1} S_{\Fq(t)}(3,1)^2 q^{3M}  \right) \\
=& \frac{1}{2} \left( \frac{2}{1-q^{-2}}S_{\Fq(t)}(3,1)^2 q^{3M} + (M-1)S_{\Fq(t)}(3,1)^2 q^{3M}  \right) \\
=&\frac{S_{\Fq(t)}(3,1)^2}{2} q^{3M}M +  \frac{q^2+1}{2(q^2-1)} S_{\Fq(t)}(3,1)^2 q^{3M}.
\end{align*}
Otherwise, if $e\geq 1$, we split the sum over $N$ into three sums: the first sum runs over $0\leq N \leq 2g_K-2$, the second over $2g_K-1 \leq N \leq M-2g_K+1$, and the last over the remaining $N \leq M$. Thus, by the result of Thunder and Widmer \eqref{eq:TW} and \eqref{eq:TW2}, the first and third sum are each $\ll S_K(3,1)q^{3M}$, where the implicit constant depends on $e$ and $q$. For the second sum we note that both $N$ and $M-N$ are $\geq 2g_K-1$ and hence we obtain it is equal to
$$
\left( \frac{M+1}{2} -(2g_K-1)\right) S_K(3,1)^2 q^{3M} + O\left(S_K(3,1)q^{3M-3g_K+2}\right).
$$
Thus, in both cases above we obtain
$$ \frac{1}{2} \# \left\{(x,y) \in \left(  \mbP^2 \times \mbP^2 \right) (K)  : H (x,y) = q^M \right\} = \frac{S_{K}(3,1)^2}{2}  q^{3M} M + O\left(q^{3M}\right).$$

Moreover, the contribution to $ \frac{1}{2} \# \left\{(x,y) \in \left(  \mbP^2 \times \mbP^2 \right) (K)  : H_{\mbP^2 \times \mbP^2} (x,y) = q^M \right\}$ coming from a proper closed subset of $\mbP^2 \times \mbP^2$ is at most
\begin{align}\label{eq:ClosedSubsets}
&= \frac{1}{2} \sum_{N=0}^M \# \left\{x \in \mbP^1 (K)  : H_1(x) = q^{\frac{N}{e}}\right\}\# \left\{y \in \mbP^2 (K) : H_2 (y) = q^{\frac{M-N}{e}} \right\}.
\end{align}
Following the same argument as above, if $K=\Fq(t)$, \eqref{eq:ClosedSubsets} is equal to
$$
 \frac{q^2-1}{2(q-1)}S_{\Fq(t)}(3,1)q^{3M} + \frac{1}{2} S_{\Fq(t)}(2,1)S_{\Fq(t)}(3,1) q^{3M}  \sum_{N=1}^{M-1} q^{-N} +  \frac{q^3-1}{2(q-1)}S_{\Fq(t)}(2,1)q^{2M}.
$$
If $[K : \Fq(t)] >1$, we split the sum over $N$ into three sums as before which contribute $\ll S_K(3,1)q^{3M}$, $\sim S_K(3,1)S_K(2,1) q^{3M-2g_K+1}$, and $\ll S_K(2,1)q^{2M}$, respectively. Hence, the contribution from proper closed subsets of $\mbP^2 \times \mbP^2$ is $O(q^{3M})$ and this concludes the proof of the first part of the theorem.

The proof of the second part is similar to the argument over number fields. Using the isomorphism
$$
\Hilb^2 \mbP^2 \setminus E \xrightarrow{\sim} \Sym^2\mbP^2 \setminus D,
$$
we note that it suffices to study rational points of $\Sym^2\mbP^2 \setminus D$ of bounded height. Let $\overline{x} \in \mbP^2 (\overline{K})$ be the conjugate of the quadratic point $x$. Then we have that
$$
\# \left\{y \in \left( \Sym^2 \mbP^2 \setminus V_r \right)(K)  : H(y) = q^M \right\} 
$$
is equal to
$$
\frac{1}{2} \# \left\{(x, \overline{x}) \in \left( \mbP^2 \times \mbP^2\right) (\overline{K})   : [K(x): K] =2, H_{2}(x)^{e}H_{2}(\overline{x})^{e} = q^M \right\}.
$$
Since by \cite[Proposition 1.17]{Rudulier}, the height is invariant under Galois conjugation, this reduces to counting quadratic points in $\mbP^2$. Thus, it is equal to 
\begin{align*}
& \frac{1}{2} \# \left\{x \in \mbP^2  (\overline{\Fq(t)})  : [K(x): K] =2, H_{ 2}(x) = q^{\frac{M}{2e}} \right\} \\
=& S_{K}(3,1)^2  q^{3M} M + O\left(\sqrt{M}q^{3M}\right),
\end{align*}
by the result of Kettlestring and Thunder \eqref{KT}.

Now we verify that the leading constant we obtain in the second part agrees with the prediction of Peyre. Over function fields, Manin's conjecture predicts that for a Fano variety $X$ with associated anticanonical height $\xcal{H}$ we have that $\lim_{s\to 1} (s-1)^r \xcal{Z}_{X} (s)$, where $\xcal{Z}_X (s)$ is the anticanonical height zeta function associated to $X$ and $r= \rk \Pic X$, is equal to Peyre's constant $c_{\xcal{H}}(X)$ as given by \eqref{eq:PeyresConstant} and that the multiplicity of the pole of $\xcal{Z}_X$ at $s=1$ equals $r$. In our case $X(K) = {\Hilb^2\mbP^2(K)}^{\text{irr}}$, since we only consider the irreducible points, and $\rk \Pic \Hilb^2 \mbP^2=2$. The height zeta function corresponding to the height $H$ defined by \eqref{eq:H} satisfies 
\begin{equation}\label{eq:ZetaFcts}
Z^{\text{irr}}_X(3s) = \xcal{Z}^{\text{irr}}_X(s)
\end{equation}
and thus, $Z^{\text{irr}}_X$ has a double pole at $s=1$. Hence, we consider $Z^{\text{irr}}_X(s+2)$ which, by Wiener--Ikehara theorem \cite[Theorem 17.4]{Rosen}, leads us to expect
$$ 
q^{-2M} \# \left\{z \in X(K) : H(z)=q^M\right\} \sim (\log q)^2 q^M M \lim_{s\to 1}(s-1)^2 Z^{\text{irr}}_{X}(s+2),
$$
as $M\to \infty$. Moreover, Lemma \ref{Ex:Hilb2P2} together with \eqref{eq:ZetaFcts} implies that $c_H(X) = 9 c_{\xcal{H}}(X)$ and thus, the conjecture predicts that the number of $K$-points $z$ on $\Hilb^2 \mbP^2$ of height $H(z)=q^M$ is
$$ 
\sim \frac{S_{K}(3,1)^2}{ (\log q)^2} \cdot (\log q)^2 q^{3M}M,
$$
as $M\to \infty$, which concludes the proof.

\section{Analogy with $0$-cycles}\label{S:ZeroCycles}

In this section we will prove a refined version of Corollary \ref{C:PNT}. 

\begin{corollary}\label{C:PNT2}
Let $m \geq 2$ and $K$ be a global function field of characteristic $>m$. Suppose that Manin\rq{}s conjecture holds for the irreducible points in $\Hilb^{m_0}\mbP^2(K)$ for all $m_0 \leq m$. Then, there exists a constant $c_m >0$ such that 
$$
\frac{\# \left\{ \text{prime effective $0$-cycles on $\mbP^2$ over $K$ of degree $m$} \right\}}{\# \left\{ \text{effective $0$-cycles on $\mbP^2$ over $K$ of degree $m$} \right\}} \to \frac{c_m}{M^{m-2}},
$$
$M\to \infty$, 
where $c_2 = \frac{2}{3}$ and 
$$
c_m =
\frac{\mu 3^{m-2} \zeta_K(3)^2 m! (m-1)!}{S_K(3,1)^{m-2}} \prod_{v \in \Omega_K} \left(1-q_v^{-1} \right)^2 \frac{\left | \Hilb^m\mbP^2(\mathbb{F}_{q_v}) \right |}{q^{2m}},
$$
if $m \geq 3$.
\end{corollary}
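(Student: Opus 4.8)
\emph{Strategy.} I would compute the numerator and the denominator of the ratio separately as functions of the height parameter $M$ and then divide; the point is that the numerator grows like $q^{3M}M$ (a single power of $M$, coming from $\rk\Pic\Hilb^m\mbP^2=2$) while the denominator grows like $q^{3M}M^{m-1}$, so the quotient is $\sim c_m M^{2-m}$.

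\emph{The numerator.} A prime effective $0$-cycle of degree $m$ on $\mbP^2$ over $K$ is the same datum as a closed point of $\mbP^2_K$ of degree $m$, equivalently (via an $m$-to-$1$ correspondence) a degree-$m$ point of $\mbP^2(\overline K)$ over $K$, and by the isomorphism \eqref{eq:isom} these are exactly the irreducible $K$-points of $\Hilb^m\mbP^2$ lying off $E$, with the anticanonical height. The hypothesis that Manin's conjecture holds for these points, together with the value of Peyre's constant from Lemma \ref{Ex:HilbmP2} and $\rk\Pic\Hilb^m\mbP^2=2$ (see \eqref{eq:PicHilbm}), gives a count $\sim A\,q^{3M}M$, where $A$ is an explicit elementary multiple of the constant of Lemma \ref{Ex:HilbmP2}. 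For $m=2$ this input is unconditional: it is the second assertion of Theorem \ref{MainResult1}.

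\emph{The denominator.} Here I would stratify the set of effective $0$-cycles of degree $m$, i.e.\ $\Sym^m\mbP^2(K)$, by the \emph{type} of $C=\sum_i n_i\mathfrak p_i$: the multiset of pairs $(\deg\mathfrak p_i,n_i)$ subject to $\sum_i n_i\deg\mathfrak p_i=m$. The anticanonical height is multiplicative over the prime parts with $H(n\mathfrak p)=H(\tilde{\mathfrak p})^n$, so for a fixed type the number of cycles of height $q^M$ is, after dividing by the symmetry factor for repeated parts, a Dirichlet convolution of the prime counts of the constituent degrees. The prime count in degree $d$ of height $q^N$ is $\sim(\mathrm{const})\,q^{3N}N^{r_d-1}$ with $r_d=\rk\Pic\Hilb^d\mbP^2$, which equals $1$ for $d=1$ (Schanuel, cf.\ \eqref{eq:SchanuelsConstantFctFields}) and $2$ for $d\ge 2$ (Kettlestrings--Thunder \eqref{KT}, equivalently Theorem \ref{MainResult1}, for $d=2$; the hypothesis for $3\le d\le m$). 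A short accounting then shows that a part of multiplicity $n\ge 2$ forces a convergent geometric factor $\sum_N q^{-(n-1)N}N^{r_d-1}$ in its variable and so costs a power of $M$, while among multiplicity-free types a part of degree $\ge 3$ spends more of the degree budget $m$ than the $2$ it contributes to the exponent of $M$. Hence the strata realising the maximal power $M^{m-1}$ are exactly those consisting of $k$ distinct quadratic points and $m-2k$ distinct rational points, all of multiplicity one, for $0\le k\le\lfloor m/2\rfloor$, and the denominator is $\sim\big(\sum_{k=0}^{\lfloor m/2\rfloor}\beta_k\big)q^{3M}M^{m-1}$. Each $\beta_k$ is explicit: it is the product of the symmetry factor $\tfrac1{k!(m-2k)!}$, the elementary convolution constant obtained from $\sum_{x_1+\cdots+x_\ell=M}\prod_i x_i^{a_i}\sim\frac{\prod_i a_i!}{(\sum_i a_i+\ell-1)!}M^{\sum_i a_i+\ell-1}$ with $\ell=m-k$ variables, $a_i=1$ on the $k$ quadratic slots and $a_i=0$ elsewhere, and the Schanuel and Kettlestrings--Thunder leading constants $S_K(3,1)^{m-2k}$ and $\big(S_K(3,1)^2\big)^k$.

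\emph{Conclusion and the hard part.} Dividing, the factor $q^{3M}$ and one power of $M$ cancel against $M^{m-1}$, leaving $c_m M^{2-m}$ with $c_m=A/\sum_k\beta_k$; substituting Peyre's constant from Lemma \ref{Ex:HilbmP2} and $S_K(3,1)$ from \eqref{eq:SchanuelsConstantFctFields} and summing the combinatorial series over $k$ produces the stated closed form. For $m=2$ only $k=0,1$ occur — the $k=1$ stratum is the irreducible count $S_K(3,1)^2q^{3M}M$ and the $k=0$ stratum is the reducible count $\tfrac12 S_K(3,1)^2q^{3M}M$ coming from the $\mbP^2\times\mbP^2$ computation of Section \ref{S:Proof} and Corollary \ref{Ex:Hilb2P2} — so the corollary is unconditional there and $c_2=1/(1+\tfrac12)=\tfrac23$. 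The main obstacle is precisely the denominator bookkeeping: organising the stratification of $\Sym^m\mbP^2(K)$, proving rigorously that every stratum other than the multiplicity-free degree-$\le 2$ ones contributes a strictly smaller power of $M$ (this is where the inputs on $\Hilb^d\mbP^2$ for \emph{all} $d\le m$ are genuinely used, so that no stratum is mis-estimated), and then carrying the symmetry factors together with the constants $S_K(3,1)$, $\zeta_K(3)$, $\mu$ and the product over places from Lemma \ref{Ex:HilbmP2} through the division to match the claimed formula for $c_m$. Each individual step is elementary; the accounting is the work.
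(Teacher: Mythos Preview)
Your overall strategy is exactly the paper's: the numerator is handled by \eqref{eq:irr} together with Lemma~\ref{Ex:HilbmP2}, the denominator is handled by stratifying $\Sym^m\mbP^2(K)$ according to the Galois-orbit structure of the underlying $m$-tuple (this is the content of Theorem~\ref{T:SymmP2} and its proof), and the corollary is obtained by dividing. For $m=2$ your accounting agrees with the paper's and correctly gives $c_2=\tfrac23$.

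For $m\geq 3$ there is a substantive disagreement in the denominator. You assert that every multiplicity-free stratum consisting of $k$ quadratic orbits and $m-2k$ rational points contributes to the leading order $q^{M}M^{m-1}$, so that the leading constant of $N_{\Sym^m\mbP^2}(M)$ is $\sum_{k=0}^{\lfloor m/2\rfloor}\beta_k$. The paper's proof of Theorem~\ref{T:SymmP2} reaches the opposite conclusion: only the $k=0$ stratum (all points $K$-rational and distinct, Lemma~\ref{BigRed}) contributes at order $q^{M}M^{m-1}$, while every stratum with at least one orbit of length $\geq 2$ is shown to be $O(q^{M}M^{m-3})$. The divergence comes from the height bookkeeping on a degree-$j$ orbit. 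In \eqref{eq:Contrib(m-j)(j)} and in the multi-orbit expression that follows it, the paper inserts $N_{{\Sym^j\mbP^2}^{\mathrm{irr}}}\bigl(\tfrac{M-i}{j}\bigr)$, which produces a factor $q^{(M-i)/j}$ and hence an exponential damping in the convolution; Lemma~\ref{L:Technical} then shows this costs powers of $M$. Your convolution instead uses $N_{{\Sym^j\mbP^2}^{\mathrm{irr}}}(M-i)\sim c\,q^{M-i}(M-i)$, so the $q$-weights combine to $q^{M}$ uniformly and the polynomial convolution yields $M^{m-1}$.

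This is not a cosmetic difference: it changes the leading constant of the denominator, and therefore the value of $c_m$. If the paper's argument stands, your claim that the mixed quadratic/rational strata reach the top order is wrong, only $\beta_0=\tfrac{S_K(3,1)^m}{3^m m!(m-1)!}$ enters, and the combinatorial sum over $k$ you describe never arises. If your normalisation is the correct one (and note that $H_{\omega^{-1}_{\Sym^j\mbP^2}}(\tilde w)=\prod_{l}H_{\omega^{-1}_{\mbP^2}}(w_l)=q^{M-i}$ by the definition in \S\ref{S:Geometry of the Hilbert scheme}, which supports your reading), then your $\sum_k\beta_k$ will \emph{not} reproduce the displayed formula for $c_m$, so the assertion ``summing the combinatorial series over $k$ produces the stated closed form'' cannot hold as written. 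Either way, this point must be resolved before the final division; it is the one place where your proposal and the paper's proof genuinely diverge.
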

When $m=2$, the result is actually unconditional due to Theorem \ref{MainResult1}.

\subsection{Improving a particular case over $\Fq$.} First, we will present an improvement to \cite[Theorem 1]{Chen} for the special case when $V=\mbP^2$. We begin with some technical lemmas.

\begin{lemma}\label{L:Chen7}
The number of effective $0$-cycles of degree $m$ on $\mbP^2$ over $\Fq$ is 
$$
\left(1+\frac{1}{q}\right) \sum_{i=0}^{k-1}(i+1)(q^{2(m-i)}+q^{2i+1}) +
\begin{cases}
\frac{m+2}{2}q^m, &\text{ if $m=2k$,}\\
 \frac{m+1}{2}q^{m-1}\left(q^2+q+1\right), &\text{ if $m=2k+1$.}
\end{cases}
$$
\end{lemma}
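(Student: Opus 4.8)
The plan is to count $\#\Sym^m\mbP^2(\Fq)$ directly, since an effective $0$-cycle of degree $m$ on $\mbP^2$ over $\Fq$ is exactly an $\Fq$-point of $\Sym^m\mbP^2$. First I would use the stratification of $\Sym^m\mbP^2$ by the ``type'' of an effective $0$-cycle, or equivalently exploit the motivic/generating-function identity for symmetric powers. Concretely, for any variety $X/\Fq$ one has the Macdonald-type identity
$$
\sum_{m\geq 0} \#\Sym^mX(\Fq)\, t^m = Z(X,t) = \prod_{\text{closed pts }P} \frac{1}{1-t^{\deg P}},
$$
so with $X=\mbP^2$ and $Z(\mbP^2,t) = \dfrac{1}{(1-t)(1-qt)(1-q^2t)}$ the left side is a completely explicit rational function. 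Thus $\#\Sym^m\mbP^2(\Fq)$ is the coefficient of $t^m$ in $\big((1-t)(1-qt)(1-q^2t)\big)^{-1}$, and the whole lemma reduces to a partial-fraction / coefficient extraction.

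The key computational step is the partial fraction decomposition
$$
\frac{1}{(1-t)(1-qt)(1-q^2t)} = \frac{A}{1-t}+\frac{B}{1-qt}+\frac{C}{1-q^2t},
$$
with $A = \frac{1}{(1-q)(1-q^2)}$, $B = \frac{q}{(q-1)(1-q)} = \frac{-q}{(q-1)^2}$... — more carefully, $A=\frac{1}{(1-q)(1-q^2)}$, $B=\frac{1}{(1-q^{-1})(1-q)}\cdot$ (evaluate at $t=1/q$), $C=\frac{1}{(1-q^{-2})(1-q^{-1})}\cdot$ (evaluate at $t=1/q^2$). Reading off coefficients gives
$$
\#\Sym^m\mbP^2(\Fq) = A + Bq^m + Cq^{2m},
$$
a closed form in $q$ and $q^m$. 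The remaining task is purely algebraic: rewrite this closed form in the piecewise shape claimed in the lemma, separating the cases $m=2k$ and $m=2k+1$ and recognizing the finite sum $\sum_{i=0}^{k-1}(i+1)(q^{2(m-i)}+q^{2i+1})$. I would do this by expanding $\sum_{i=0}^{k-1}(i+1)(q^{2(m-i)}+q^{2i+1})$ using the standard formulas for $\sum (i+1)x^i$, collecting terms, and checking that the difference between the claimed expression and $A+Bq^m+Cq^{2m}$ vanishes identically as a rational function of $q$; since both sides are determined by finitely many coefficients, it suffices to verify agreement for, say, $m=2,3,4,5$ and then argue by the linear recurrence $a_m = (1+q+q^2)a_{m-1} - (q+q^2+q^3)a_{m-2} + q^3 a_{m-3}$ satisfied by $\#\Sym^m\mbP^2(\Fq)$ that the two sequences coincide for all $m$.

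The main obstacle is not conceptual but bookkeeping: matching the clean closed form $A+Bq^m+Cq^{2m}$ against the asymmetric, parity-dependent expression in the statement — in particular producing the ``middle'' term $\frac{m+2}{2}q^m$ (resp. $\frac{m+1}{2}q^{m-1}(q^2+q+1)$) and the factor $1+1/q$ in front of the sum. A cleaner route to the stated form, which I would actually prefer, is to count $0$-cycles by their decomposition type directly: an effective $0$-cycle on $\mbP^2$ of degree $m$ is a sum $\sum n_i P_i$ with $\sum n_i \deg P_i = m$; grouping by how much ``mass'' sits on degree-$1$ points versus higher-degree points, and using $\#\mbP^2(\Fq)=q^2+q+1$ together with the count of closed points of each degree from $Z(\mbP^2,t)$, yields exactly the split into $q^{2(m-i)}$ (mass $m-i$ concentrated, roughly, on low-degree behaviour) and $q^{2i+1}$ terms, with the $(i+1)$ coming from the number of partitions/multiplicities and the $1+1/q$ emerging naturally; the central term is then the ``balanced'' contribution at $i=k$. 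Either way, once the generating function is in hand the proof is a finite verification, so I would present the generating-function computation and then the algebraic identification as a short lemma-internal calculation.
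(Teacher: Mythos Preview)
Your approach is essentially the paper's: identify effective $0$-cycles of degree $m$ with $\Fq$-points of $\Sym^m\mbP^2$, invoke the generating-function identity $\sum_{m\geq 0}\lvert\Sym^m\mbP^2(\Fq)\rvert\,t^m=Z(\mbP^2,t)=\bigl((1-t)(1-qt)(1-q^2t)\bigr)^{-1}$, and read off the $t^m$-coefficient. The only difference is in the extraction step: the paper simply says ``by Taylor expansion at $t=0$'', i.e.\ it multiplies the three geometric series $\sum t^a$, $\sum q^bt^b$, $\sum q^{2c}t^c$ and collects powers of $q$, which yields directly that the coefficient of $q^s$ in $\lvert\Sym^m\mbP^2(\Fq)\rvert$ is $\lfloor s/2\rfloor+1$ for $s\leq m$ and $m-\lceil s/2\rceil+1$ for $s>m$, from which the stated parity-split polynomial form (and the factor $1+q^{-1}$) falls out without any denominators. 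Your partial-fraction route is equally valid but produces $A+Bq^m+Cq^{2m}$ with $(q-1)^2(q+1)$ in the denominators, so the ``bookkeeping obstacle'' you flag is self-inflicted; if you expand the product of geometric series instead, the matching to the stated form is immediate and you can dispense with the recurrence check and the separate combinatorial argument.
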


\begin{proof}
Effective $0$-cycles of degree $m$ on $\mbP^2$ over $\Fq$ correspond to $\Fq$-points on $\Sym^m\mbP^2$. By \cite[Remark 1.2.4]{G}, we have
\begin{align*}
\sum_{m=0}^{\infty} \left |  \Sym^m\mbP^2(\Fq) \right | t^m = \exp \left(  \sum_{k=1}^{\infty}  \left | \mbP^2(\mathbb{F}_{q^k}) \right | \frac{t^k}{k} \right) = Z(\mbP^2,t), 
\end{align*}
the zeta function of $\mbP^2$ over $\Fq$. This is $(1-q^2t)^{-1}(1-qt)^{-1}(1-t)^{-1}$. Thus, by using Taylor expansion at $t=0$ we obtain the claimed result.
\end{proof}

\begin{lemma}\label{L:Chen8}
The number of prime effective $0$-cycles of degree $m$ on $\mbP^2$ over $\Fq$ is 
$$
\begin{cases}
 \frac{1}{m} \left(q^{2m} - q^{\frac{m}{2}} \right), &\text{ if $m$ is even,}\\
 \frac{1}{m} \left(q^{2m} +q^m - q^{\frac{2m}{j}} - q^{\frac{m}{j}} \right), &\text{ if $m$ is odd,}
\end{cases}
$$
where $j$ is the smallest divisor of $m$ that is greater than 1.
\end{lemma}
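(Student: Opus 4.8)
The plan is to observe that a prime effective $0$-cycle of degree $m$ on $\mbP^2$ over $\Fq$ is exactly a closed point of $\mbP^2_{\Fq}$ of degree $m$ --- equivalently, a size-$m$ orbit of the $q$-power Frobenius acting on $\mbP^2(\overline{\Fq})$ --- and then to count these by M\"obius inversion against the point counts $\#\mbP^2(\mathbb{F}_{q^d}) = q^{2d}+q^d+1$, in the same vein as the generating-function computation behind Lemma \ref{L:Chen7}.

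In detail, write $a_d$ for the number of closed points of $\mbP^2_{\Fq}$ of degree $d$. Every $\mathbb{F}_{q^n}$-point lies over a unique closed point whose degree divides $n$, and a closed point of degree $d\mid n$ accounts for exactly $d$ of them, so $\#\mbP^2(\mathbb{F}_{q^n}) = \sum_{d\mid n} d\,a_d$; this is what one gets by taking $\log$ of the Euler product $Z(\mbP^2,t)=\prod_{x}(1-t^{\deg x})^{-1}$, the same zeta function used in Lemma \ref{L:Chen7}. M\"obius inversion then gives
\begin{equation*}
m\,a_m = \sum_{d\mid m}\mu(m/d)\,\#\mbP^2(\mathbb{F}_{q^d}) = \sum_{d\mid m}\mu(m/d)(q^{2d}+q^d+1).
\end{equation*}
Since $m\geq 2$ we have $\sum_{d\mid m}\mu(m/d)=0$, so the constant terms cancel and $m\,a_m = \sum_{d\mid m}\mu(m/d)q^{2d} + \sum_{d\mid m}\mu(m/d)q^d$. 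Only divisors $d$ with $m/d$ squarefree contribute: $d=m$ gives $q^{2m}$ (resp. $q^m$), and the divisors $d=m/p$ with $p\mid m$ prime give $-q^{2m/p}$ (resp. $-q^{m/p}$). I would then split on the parity of $m$, with $j$ its least prime factor: for $m$ even the prime $p=2$ gives $2m/p=m$, so the $-q^m$ produced by the first sum cancels the $+q^m$ from the second, leaving $q^{2m}-q^{m/2}$; for $m$ odd there is no cancellation and we are left with $q^{2m}+q^m-q^{2m/j}-q^{m/j}$. Dividing by $m$ produces the two claimed expressions.

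The one step that needs care --- and the only real (if modest) obstacle --- is the M\"obius bookkeeping: the four written terms record precisely the top contributions, and they exhaust the two sums exactly when $1$ and $j$ are the only squarefree values taken by $m/d$, i.e. when $m$ is a prime power (the even case being $j=2$, where $2m/j=m$ forces the cancellation above). When $m$ has several distinct prime factors the sums $\sum_{d\mid m}\mu(m/d)q^{2d}$ and $\sum_{d\mid m}\mu(m/d)q^d$ acquire further lower-order terms from the remaining squarefree divisors of $m$, so in a careful write-up I would either restrict to prime-power $m$ or keep the full inclusion--exclusion sum in place of the four terms. Apart from this, the argument is an elementary M\"obius/Taylor manipulation entirely parallel to Lemma \ref{L:Chen7}.
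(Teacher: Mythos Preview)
Your approach via M\"obius inversion is correct and in fact cleaner than the paper's. The paper does not invert directly: it starts from the recursion $m\,|P_m(\mbP^2,\Fq)| = \#\mbP^2(\mathbb{F}_{q^m}) - \sum_{r\mid m,\,r\neq m} r\,|P_r(\mbP^2,\Fq)|$, then peels off only the largest proper divisor $k=m/j$ by substituting the same recursion for $k\,|P_k|$, and declares ``and hence the result''. This amounts to assuming that the two residual sums $\sum_{s\mid k,\,s\neq k} s\,|P_s|$ and $\sum_{r\mid m,\,r\neq k,m} r\,|P_r|$ cancel, which---exactly as you observe---holds if and only if the divisors of $m$ other than $m$ coincide with the divisors of $k=m/j$, i.e. if and only if $m$ is a prime power. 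Your explicit check (e.g. $m=6$ gives $\tfrac{1}{6}(q^{12}-q^4-q^3+q)$ rather than $\tfrac{1}{6}(q^{12}-q^3)$) is correct, and your proposed fix---either restrict to prime-power $m$ or retain the full M\"obius sum---is the right one.

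In short: there is no gap in your argument; rather, you have located a gap in the lemma as stated and in the paper's proof. Your M\"obius route makes the source of the discrepancy transparent (the extra squarefree divisors of $m$), whereas the paper's recursive unwinding obscures it. For the downstream application (Theorem~\ref{T:Chen1}) the missing terms are of size at most $q^{2m/3}$ and so are harmless for the $O(1/(mq^m))$ claim, but the lemma itself, read as an exact identity, is only valid for prime-power $m$.
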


\begin{proof}
By \cite[Definition 1.2.3, Remark 1.2.4(1)]{G}, the number of prime effective $0$-cycles of degree $m$ on $\mbP^2$ over $\Fq$ is 
\begin{equation}\label{eq:primes}
\frac{q^{2m}+q^m +1}{m} - \frac{1}{m}\sum_{\substack{r\mid m \\ r \neq m}} r \left | P_r(\mbP^2,\Fq) \right |,
\end{equation}
where $P_r(\mbP^2,\Fq)$ are the prime effective $0$-cycles of degree $r$ on $\mbP^2$ over $\Fq$. If $m$ is even, write $m=2k$. Then, $k$ is the largest proper divisor of $m$, and \eqref{eq:primes} becomes
$$
\frac{q^{2m}+q^m +1}{m} - \frac{1}{m} \left(q^{2k} + q^k + 1 - \sum_{\substack{s\mid k \\ s \neq k}} s \left | P_s(\mbP^2,\Fq) \right | \right) - \frac{1}{m} \sum_{\substack{r\mid m \\ r \neq k, m}} r \left | P_r(\mbP^2,\Fq) \right |,
$$
and hence the result. If $m$ is odd, the proof is similar.
\end{proof}

For the case when $V=\mbP^2$, \eqref{eq:Chen} states that the proportion of effective $0$-cycles of degree $m$ in $\mbP^2$ is $\frac{1}{m}\left(1-q^{-1}-q^{-2}+q^{-3}\right) +  O\left(\frac{1}{mq^{m/2}} \right)$. However, by carefully combining Lemmas \ref{L:Chen7} and \ref{L:Chen8}, we obtain the following improvement of the error term. 

\begin{theorem}\label{T:Chen1}
The proportion of prime effective $0$-cycles of degree $m$ on $\mbP^2$ out of all effective $0$-cycles of degree $m$ on $\mbP^2$ is
$$
\frac{1}{m}\left(1- q^{-1} - q^{-2} + q^{-3} \right) + O\left(\frac{1}{mq^m} \right),
$$
as $m\to\infty$.
\end{theorem}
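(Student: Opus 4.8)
The plan is to form the desired ratio directly from the two explicit counts established just above: the numerator comes from Lemma \ref{L:Chen8} and the denominator from Lemma \ref{L:Chen7}, and both the main term and the error then drop out of an elementary expansion.

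First I would put the denominator in a convenient closed form. The number $D_m$ of effective $0$-cycles of degree $m$ on $\mbP^2$ over $\Fq$ is the coefficient of $t^m$ in $Z(\mbP^2,t) = \bigl((1-t)(1-qt)(1-q^2t)\bigr)^{-1}$, so a partial-fraction decomposition (equivalently, reorganising the expression in Lemma \ref{L:Chen7}) gives
\begin{equation*}
D_m = \frac{q^{2m+3} - q^{m+2} - q^{m+1} + 1}{(q-1)^2(q+1)} = \frac{q^3}{(q-1)^2(q+1)}\, q^{2m}\bigl(1 + O(q^{-m})\bigr).
\end{equation*}
I would also record here the identity $(q-1)^2(q+1)/q^3 = 1 - q^{-1} - q^{-2} + q^{-3}$, which is where the shape of the main term comes from.

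Next, for the numerator, I would observe that Lemma \ref{L:Chen8} gives in every case $m P_m = q^{2m} + O(q^m)$, where $P_m$ is the number of prime effective $0$-cycles of degree $m$: for even $m$ the correction is $-q^{m/2}$, and for odd $m$ it is $q^m - q^{2m/j} - q^{m/j}$, which is $O(q^m)$ because the smallest divisor $j>1$ of an odd $m$ satisfies $j\geq 3$, so that $q^{2m/j}\leq q^{2m/3}\leq q^m$. Hence $P_m = \tfrac{1}{m} q^{2m}\bigl(1 + O(q^{-m})\bigr)$.

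Finally I would divide, using $\bigl(1 + O(q^{-m})\bigr)^{-1} = 1 + O(q^{-m})$:
\begin{equation*}
\frac{P_m}{D_m} = \frac{1}{m}\cdot\frac{(q-1)^2(q+1)}{q^3}\cdot\bigl(1 + O(q^{-m})\bigr) = \frac{1}{m}\bigl(1 - q^{-1} - q^{-2} + q^{-3}\bigr) + O\!\left(\frac{1}{m q^m}\right),
\end{equation*}
which is the assertion. There is no genuine obstacle beyond careful bookkeeping of the error; the one point worth flagging is that the first subleading terms of both $P_m$ and $D_m$ are a constant multiple of $q^m$, i.e. of relative size $q^{-m}$, so they land exactly at the level of the stated error $O(1/(mq^m))$ and — for odd $m$, where the $q^m$ term of Lemma \ref{L:Chen8} is genuinely present — this error cannot be improved in general.
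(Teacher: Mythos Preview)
Your proof is correct and follows exactly the approach the paper indicates: it simply combines Lemmas~\ref{L:Chen7} and~\ref{L:Chen8} and reads off the ratio. The paper's own proof is a single sentence (``by carefully combining Lemmas~\ref{L:Chen7} and~\ref{L:Chen8}''), and your write-up supplies precisely those details; the partial-fraction closed form $D_m=(q^{2m+3}-q^{m+2}-q^{m+1}+1)/((q-1)^2(q+1))$ is a clean way to organise the denominator, but it is equivalent to the piecewise expression of Lemma~\ref{L:Chen7}.
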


\subsection{$0$-cycles on $\mbP^2$ over function fields.} In this section, we present an application of our main result which can be seen as an analogue of the prime number theorem for $0$-cycles on $\mbP^2$ over function fields. By Theorem \ref{MainResult0}, if $K$ is a function field of characteristic $>2$, then Manin's conjecture holds for the irreducible points of $\Hilb^2\mbP^2(K)$, which were introduced in Section \ref{S:Geometry of the Hilbert scheme}. Throughout this section, we will assume that Manin\rq{}s conjecture holds for the irreducible points in $\Hilb^m\mbP^2(K)$ for all $m\geq 3$, where $K$ is a function field of characteristic $>m$. Let 
$$
N_{{\Sym^m\mbP^2}^{\text{irr}}}(M) = \left \{x \in \Sym^m\mbP^2(K) : x \text{ irreducible}, H_{\omega^{-1}_{\Sym^m\mbP^2}}(x) = q^M   \right\}.
$$
By the isomorphism given in \eqref{eq:isom} and Theorem \ref{MainResult1}, we have
\begin{equation}\label{eq:irr}
N_{{\Sym^m\mbP^2}^{\text{irr}}}(M) \sim  \begin{cases}
\frac{1}{9}S_K(3,1)^2 q^M M, &\text{ if $m=2$,}\\
c_{\omega^{-1}_{\Hilb^m\mbP^2}}(\Hilb^m\mbP^2) \left(\log q\right)^2 q^M M, &\text{ if $m>2$,}
\end{cases}  
\end{equation}
as $M\to \infty$, since $\rk \Pic \Hilb^m \mbP^2 = \rk\Pic \mbP^2 +1 = 2$ for all $m\geq 2$. By understanding the various types of non-irreducible points of $\Sym^m\mbP^2$, we shall obtain the following main result, which together with \eqref{eq:irr} and Lemma \ref{Ex:HilbmP2}, implies Corollary \ref{C:PNT}. 

\begin{theorem}\label{T:SymmP2}
Let $m$ be an integer $\geq 2$ and $K$ be a global function field of characteristic $>m$. Suppose that Manin\rq{}s conjecture holds for the irreducible points in $\Hilb^{m_0}\mbP^2(K)$ for all $3 \leq m_0 \leq m$. Then,  
$$ 
N_{\Sym^m\mbP^2}(M) \sim \frac{S_K(3,1)^2}{6} q^M M, \text{ if $m=2$,}
$$
and
$$
N_{\Sym^m\mbP^2}(M) = \frac{S_K(3,1)^m}{ 3^m m!(m-1)!} q^M M^{m-1} + O(q^M M^{m-3}), \text{ if $m>2$.}
$$
as $M\to \infty$. 
\end{theorem}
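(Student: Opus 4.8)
The plan is to use the dictionary identifying $\Sym^m\mbP^2(K)$ with the set of effective $0$-cycles of degree $m$ on $\mbP^2$ over $K$: each such point is uniquely a formal sum $C=\sum_i n_iP_i$ with the $P_i$ distinct closed points of $\mbP^2_K$, $n_i\geq 1$, and $\sum_i n_i\deg P_i=m$. A closed point $P_i$ of degree $d_i$ is the same datum as an irreducible point $\tilde{P}_i\in\Sym^{d_i}\mbP^2(K)$, and by the multiplicativity of the anticanonical height recalled in Section \ref{S:Geometry of the Hilbert scheme}, together with its invariance under Galois conjugation \cite[Proposition 1.17]{Rudulier}, we have
$$
H_{\omega^{-1}_{\Sym^m\mbP^2}}(C)=\prod_i H_{\omega^{-1}_{\Sym^{d_i}\mbP^2}}(\tilde{P}_i)^{\,n_i}.
$$
First I would stratify $N_{\Sym^m\mbP^2}(M)=\sum_\lambda N_\lambda(M)$ according to the \emph{type} $\lambda$ of $C$, namely the finite multiset of pairs $(d_i,n_i)$; for fixed $m$ there are only finitely many types.

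Two reductions then simplify each $N_\lambda(M)$. If some $n_i\geq 2$, then $H_{\omega^{-1}_{\Sym^{d_i}\mbP^2}}(\tilde{P}_i)^{\,n_i}=q^{M_i}$ forces the height of $\tilde{P}_i$ down to $q^{M_i/n_i}$, so crude point counts — from \eqref{eq:TW}–\eqref{eq:TW2} in degree $1$ and from \eqref{KT} or the assumed form of Manin's conjecture in higher degree — show that a type with a repeated closed point contributes $O\!\left(q^MM^{m-3}\right)$. For a \emph{squarefree} type, corresponding to a partition $m=d_1+\dots+d_k$ realised by $k$ distinct closed points, inclusion–exclusion over coincidences between closed points of equal degree yields
$$
N_\lambda(M)=\frac{1}{\prod_{d}m_d!}\,\sum_{N_1+\dots+N_k=M}\ \prod_{j=1}^{k}N_{{\Sym^{d_j}\mbP^2}^{\text{irr}}}(N_j)\;+\;\bigl(\text{terms of lower order in }M\bigr),
$$
where $m_d=\#\{j:d_j=d\}$ and a forced coincidence drops the power of $M$ by removing a free summation variable. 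Into this I would feed the available asymptotics: $N_{{\Sym^1\mbP^2}^{\text{irr}}}(N)=N_{\mbP^2}(N)$, which has a simple pole and grows like a constant times $q^N$ by \eqref{eq:TW}–\eqref{eq:TW2}, with leading coefficient read off from Example \ref{Ex:Pn}; and, for $2\leq d\leq m$, $N_{{\Sym^{d}\mbP^2}^{\text{irr}}}(N)$, which has a double pole and grows like $c_d\,q^NN$ — unconditionally for $d=2$ by Theorem \ref{MainResult1}, and by hypothesis for $3\leq d\leq m$, with $c_d$ as in Lemma \ref{Ex:HilbmP2}.

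Finally I would evaluate the convolutions. A $k$-fold convolution in which the factor of degree $d_j$ contributes $c_j\,q^NN^{\varepsilon_{d_j}}$, where $\varepsilon_1=0$ and $\varepsilon_d=1$ for $d\geq 2$, equals, after replacing the sums by Dirichlet (beta) integrals,
$$
\Bigl(\textstyle\prod_j c_j\Bigr)\,q^M\,\frac{\prod_j \varepsilon_{d_j}!}{\bigl(\sum_j \varepsilon_{d_j}+k-1\bigr)!}\;M^{\,\sum_j \varepsilon_{d_j}+k-1}\;+\;O\!\left(q^MM^{\,\sum_j \varepsilon_{d_j}+k-2}\right).
$$
Writing $l=\#\{j:d_j\geq 2\}$, one has $\sum_j\varepsilon_{d_j}+k=l+k\leq m$ with equality exactly when every part $d_j$ is $1$ or $2$, so the partitions of $m$ into $1$'s and $2$'s are precisely the ones producing the maximal power $M^{m-1}$, while all other types feed the error term. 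Collecting the contributions of these extremal partitions and inserting the explicit constants of Example \ref{Ex:Pn} and Corollary \ref{Ex:Hilb2P2} produces the asymptotic in the statement, with (i) non-extremal partitions, (ii) coincidence corrections, (iii) repeated-point types, and (iv) the error terms of the input asymptotics all absorbed into $O\!\left(q^MM^{m-3}\right)$; when $m=2$ the leading order $q^MM$ comes only from the types $(1,1)$ and $(2)$, the latter supplied unconditionally by Theorem \ref{MainResult1}, and they combine to the unconditional constant $S_K(3,1)^2/6$. The main obstacle is precisely this last bookkeeping: one must control the contribution of \emph{every} cycle type, uniformly in the input heights, sharply enough both to isolate the leading constant and to push the remainder down to $M^{m-3}$, which is where the uniform upper bounds provided by \eqref{eq:TW2}, \eqref{KT} and the assumed Manin's conjecture for $\Hilb^{m_0}\mbP^2$ with $m_0\leq m$ are indispensable.
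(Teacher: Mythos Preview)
Your stratification by cycle type and the ensuing convolutions are exactly the skeleton of the paper's proof. The decisive divergence is in how much of the height budget a closed point of degree $d\geq 2$ consumes. You feed the full share $N_j$ into $N_{{\Sym^{d_j}\mbP^2}^{\text{irr}}}(N_j)\sim c_{d_j}\,q^{N_j}N_j$; the paper, by contrast, records this factor as $N_{{\Sym^{d_j}\mbP^2}^{\text{irr}}}(N_j/d_j)\sim c_{d_j}\,q^{N_j/d_j}(N_j/d_j)$ (see \eqref{eq:Contrib(m-j)(j)} and \eqref{eq:Contrib(m-2k)(k)}). That exponent $q^{N_j/d_j}$ forces any convolution containing a part of degree $\geq 2$ to concentrate at the boundary, and after the cancellation organised by Lemma~\ref{L:Technical} its contribution drops to $O(q^MM^{m-3})$. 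In the paper's argument, therefore, \emph{only} the all-$1$'s type of Lemma~\ref{BigRed} survives at order $M^{m-1}$, and that single term is precisely where the constant $S_K(3,1)^m/\bigl(3^m m!(m-1)!\bigr)$ comes from.

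Under your accounting, every partition of $m$ into $1$'s and $2$'s contributes at order $M^{m-1}$, and summing your own Beta-integral formula over them yields leading coefficient
\[
\frac{S_K(3,1)^m}{3^m(m-1)!}\sum_{a+2b=m}\frac{1}{a!\,b!},
\]
which matches the stated constant only when $m=2$ (already for $m=3$ this sum is $\tfrac{7}{6}$, not $\tfrac{1}{m!}=\tfrac{1}{6}$). So the sentence ``collecting the contributions of these extremal partitions \ldots\ produces the asymptotic in the statement'' is the genuine gap: it is asserted rather than checked, and on your own numbers it is false for $m\geq 3$. You must either revisit the height allocated to a degree-$d$ factor---recovering the $q^{N_j/d_j}$ damping the paper uses and then the boundary cancellation of Lemma~\ref{L:Technical}---or else explain why the extra $1$--$2$ partitions do not, after all, contribute at order $M^{m-1}$. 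Note that the paper's own text just above \eqref{eq:Contrib(m-j)(j)} writes the constraint as $H_{\omega^{-1}_{\mbP^2}}(w)^j=q^{M-i}$, i.e.\ $H_{\omega^{-1}_{\Sym^j\mbP^2}}(\tilde w)=q^{M-i}$, so the passage to the argument $(M-i)/j$ deserves scrutiny; resolving that normalisation is exactly what your proof is missing.
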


To prove Theorem \ref{T:SymmP2}, we will require the following results.

\begin{lemma}\label{BigRed}
The number of reducible points in $\Sym^m \mbP^2(K)$, for $m\geq 2$, of the shape $\pi(v_1, \ldots, v_m)$, where $v_i \in \mbP^2(K)$ and $v_i \neq v_j$ for all $1 \leq i,j \leq m$, is  
$$
\sim \frac{S_K(3,1)^m}{ 3^m m!(m-1)!} q^M M^{m-1} ,
$$
 as $M\to \infty$.
\end{lemma}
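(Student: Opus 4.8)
The plan is to identify this count, up to the factor $m!$, with the $m$-fold additive convolution of the counting function of $\mbP^2$. A reducible point $\pi(v_1,\dots,v_m)$ with the $v_i$ pairwise distinct $K$-rational points is the image of exactly $m!$ ordered tuples, and it determines and is determined by the $m$-element set $\{v_1,\dots,v_m\}\subset\mbP^2(K)$; moreover, by the definition of the height on $\Sym^m\mbP^2$ recalled in Section \ref{S:Geometry of the Hilbert scheme},
\[
H_{\omega^{-1}_{\Sym^m\mbP^2}}\bigl(\pi(v_1,\dots,v_m)\bigr)=\prod_{i=1}^m H_{\omega^{-1}_{\mbP^2}}(v_i).
\]
Hence the quantity in the statement equals $\tfrac1{m!}$ times the number of ordered tuples $(v_1,\dots,v_m)\in\mbP^2(K)^m$ with pairwise distinct entries and $\prod_i H_{\omega^{-1}_{\mbP^2}}(v_i)=q^M$.

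I would first count \emph{all} ordered tuples, dropping the distinctness condition. Writing $N_{\mbP^2}(i)=\#\{v\in\mbP^2(K):H_{\omega^{-1}_{\mbP^2}}(v)=q^i\}$, this number is the convolution $\sum_{i_1+\dots+i_m=M}\prod_{j=1}^m N_{\mbP^2}(i_j)$. By Example \ref{Ex:Pn} we have $c_{\xcal{H}}(\mbP^2)=S_K(3,1)/(3\log q)$, and since $\rk\Pic\mbP^2=1$, Manin's asymptotic \eqref{eq:ManinsConjecture} for $\mbP^2$ — equivalently Schanuel's theorem \eqref{eq:SchanuelsConstantFctFields} made quantitative by the Thunder--Widmer estimates \eqref{eq:TW} and \eqref{eq:TW2} — reads $N_{\mbP^2}(i)\sim\tfrac{S_K(3,1)}{3}q^i$. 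Substituting this and splitting each summation variable into a ``large'' range, where the power-saving form \eqref{eq:TW} applies, and a bounded ``small'' range, where one uses the crude bound \eqref{eq:TW2} — exactly the device used in the proof of Theorem \ref{MainResult1} and in Theorem \ref{thm:products} — one finds that all error terms and the small ranges together contribute $O(q^M M^{m-2})$, while the main term is $\bigl(\tfrac{S_K(3,1)}{3}\bigr)^m q^M$ times the number of compositions of $M$ into $m$ nonnegative parts, namely $\bigl(\tfrac{S_K(3,1)}{3}\bigr)^m q^M\binom{M+m-1}{m-1}$. Since $\binom{M+m-1}{m-1}\sim M^{m-1}/(m-1)!$, the number of ordered $m$-tuples of total height $q^M$, without the distinctness constraint, is $\sim\dfrac{S_K(3,1)^m}{3^m(m-1)!}\,q^M M^{m-1}$.

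It remains to discard the tuples with a repeated entry and to divide by $m!$. If $v_j=v_k$ for some $j\neq k$, the tuple lies in one of the $\binom m2$ partial diagonals, each isomorphic to $\mbP^2(K)^{m-1}$, on which the height is a product of $m-1$ factors $H_{\omega^{-1}_{\mbP^2}}$ with one of them squared; running the same convolution estimate with $m-1$ free variables bounds the number of such tuples of total height $q^M$ by $O(q^M M^{m-2})$, of strictly smaller order than the main term. Therefore the number of ordered tuples with pairwise distinct entries still has leading term $\tfrac{S_K(3,1)^m}{3^m(m-1)!}q^M M^{m-1}$, and dividing by $m!$ yields $\dfrac{S_K(3,1)^m}{3^m\,m!\,(m-1)!}\,q^M M^{m-1}$, as claimed. (For $m=2$ this gives $\tfrac{S_K(3,1)^2}{18}q^M M$.)

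The only genuine work lies in the error analysis of the $m$-fold convolution: one must check that the contribution of the compositions in which some $i_j$ falls below the Thunder--Widmer threshold $2g_K-1$, where \eqref{eq:TW} must be replaced by the weaker \eqref{eq:TW2}, together with the accumulation of the tail errors over all $\binom{M+m-1}{m-1}\asymp M^{m-1}$ compositions, is genuinely $o(q^M M^{m-1})$. This is routine and mirrors the ``splitting of the sum'' argument already carried out in Section \ref{S:Proof}; no new idea is needed beyond bookkeeping, though a little care with the exponents yields the sharper $O(q^M M^{m-2})$ remainder that is convenient for the proof of Theorem \ref{T:SymmP2}.
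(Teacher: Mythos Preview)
Your proposal is correct and follows essentially the same approach as the paper: the paper simply invokes Theorem~\ref{thm:products} for the $m$-fold product $X=(\mbP^2)^m$, which packages exactly the convolution estimate you spell out, and then implicitly divides by $m!$. Your version is in fact more careful than the paper's, since you explicitly address the diagonal removal and the $m!$ factor, both of which the paper's three-line proof leaves to the reader.
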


\begin{proof}
Let $X = V^m$, where $V$ is a Fano variety over a function field $K$ satisfying Manin\rq{}s conjecture. By Theorem \ref{thm:products}, we have 
$$
N_{X}(M) \sim c_{H_{\omega_{X}^{-1}}}(X) \frac{(\log q)^{mr_{V}}}{(mr_V-1)!} q^M M^{mr_V-1},
$$
 as $M\to \infty$. Set $V=\mbP^2$. Since $\rk \Pic\mbP^2 = 1$, we have $\rk \Pic X = m$. Moreover, by Example \ref{Ex:Pn} and the proof of Theorem \ref{thm:products} we have
$$
c_{H_{ \omega_{X}^{-1 } }}\left(X\right) = c_{H_{ \omega_{\mbP^2}^{-1 } }}^m \left(\mbP^2\right)  = \frac{S_K(3,1)^m}{3^m \left(\log q\right)^m},
$$ 
which concludes the proof. 
\end{proof}

\begin{lemma}\label{L:Technical}
Let $m, t, j \in \mbZ_{>0}$, such that $t<m$, and $1\leq j <t$. We have 
$$
\sum_{i=0}^{M} q^{\frac{(t-j)i}{t}} i^{m-t} =\left( \frac{1}{\frac{t-j}{t}\log q} +\frac{1}{2} \right)q^{\frac{t-j}{t}M} M^{m-t} + O\left(q^{ \frac{t-j}{t} M} M^{m-t-1} \right)
$$
as $M\to \infty$.
\end{lemma}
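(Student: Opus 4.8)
The plan is to reduce this to a one-variable asymptotic estimate. Set $\alpha := \frac{t-j}{t}$, so that $0<\alpha<1$ and, for $m$ fixed, $\alpha$ is bounded away from $0$ and $1$ (since $1\le j<t\le m$); put $n := m-t\ge 1$ and $c := \alpha\log q>0$, so that the sum in question is $S_M = \sum_{i=0}^{M} q^{\alpha i}i^{n} = \sum_{i=0}^{M} f(i)$ with $f(x):=q^{\alpha x}x^{n}$. I would estimate $S_M$ by Euler--Maclaurin summation, comparing it with $\int_0^M f(x)\,\diff x$.

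First the integral: repeated integration by parts gives $\int_0^M q^{\alpha x}x^n\,\diff x = q^{\alpha M}\bigl(\tfrac{M^n}{c}-\tfrac{nM^{n-1}}{c^2}+\cdots\bigr)+\mathrm{const} = \frac{q^{\alpha M}M^n}{\alpha\log q}+O\bigl(q^{\alpha M}M^{n-1}\bigr)$, which produces the $\tfrac{1}{\frac{t-j}{t}\log q}$ part of the leading coefficient. The Euler--Maclaurin boundary term $\tfrac12\bigl(f(0)+f(M)\bigr)$ equals $\tfrac12 q^{\alpha M}M^n$, since $f(0)=0^{\,n}=0$ (this is where $n=m-t\ge 1$ is used), and this supplies the remaining $+\tfrac12$. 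Adding the two contributions gives precisely the asserted main term $\bigl(\tfrac{1}{\frac{t-j}{t}\log q}+\tfrac12\bigr)q^{\frac{t-j}{t}M}M^{m-t}$.

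The remaining step — which I expect to be the main obstacle — is to show that everything not accounted for above is $O\bigl(q^{\frac{t-j}{t}M}M^{m-t-1}\bigr)$, uniformly in $m,t,j$. This needs care: the successive Euler--Maclaurin corrections $\tfrac{B_{2k}}{(2k)!}\bigl(f^{(2k-1)}(M)-f^{(2k-1)}(0)\bigr)$ each acquire a factor $(\log q)^{2k-1}$ from differentiating $q^{\alpha x}$, so individually they are of size $\asymp q^{\alpha M}M^n$ rather than lower order, and one cannot simply stop after the trapezoidal term. I would handle this either by summing the entire correction series and isolating its contribution to the coefficient of $q^{\alpha M}M^n$, or, more cleanly, by avoiding Euler--Maclaurin altogether: with $B_k:=\sum_{i=0}^{k}q^{\alpha i}=\frac{q^{\alpha(k+1)}-1}{q^{\alpha}-1}$, Abel summation gives $S_M = M^n B_M - \sum_{i=0}^{M-1}\bigl((i+1)^n-i^n\bigr)B_i$, and since $(i+1)^n-i^n = n\,i^{n-1}+O(i^{n-2})$ the second sum is governed by the same statement at exponent $n-1$, yielding an induction on $n$ with base case $n=1$ from the closed form for $\sum_{i\le M}i\,r^i$. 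In either route the lemma comes down to a careful accounting of the lower-order terms and of the dependence of the constants on $m$; pinning down the coefficient of $q^{\frac{t-j}{t}M}M^{m-t}$ exactly, and confirming that the error is genuinely of order $M^{m-t-1}$, is the delicate point.
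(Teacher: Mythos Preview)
Your approach via Euler--Maclaurin is exactly what the paper does: it writes $\sum_i f(i)\sim\int_0^M f+\tfrac12 f(M)+\sum_{k\ge1}\tfrac{B_{2k}}{(2k)!}\bigl(f^{(2k-1)}(M)-f^{(2k-1)}(0)\bigr)$, evaluates the integral (via a substitution and incomplete gamma functions) to obtain the $\tfrac{1}{\alpha\log q}$ contribution, takes the $\tfrac12 f(M)$ boundary term for the $+\tfrac12$, and then simply asserts that ``computing derivatives of $f$'' shows the Bernoulli sum is $O\bigl(q^{\alpha M}M^{m-t-1}\bigr)$. The obstacle you flag is precisely the step the paper slides over, and your instinct is correct: that assertion is false, because the leading part of each $f^{(2k-1)}(M)$ is $(\alpha\log q)^{2k-1}q^{\alpha M}M^{m-t}$.

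In fact the lemma as stated is not true. Already for $n=m-t=1$ the closed form gives $\sum_{i=0}^M i\,r^i=\tfrac{r}{r-1}\,M r^M+O(r^M)$ with $r=q^{\alpha}$, so the genuine leading coefficient is $\tfrac{q^{\alpha}}{q^{\alpha}-1}=\tfrac{1}{1-q^{-\alpha}}$, whereas $\tfrac{1}{\alpha\log q}+\tfrac12$ is only the truncation of the expansion $\tfrac{1}{1-e^{-x}}=\tfrac{1}{x}+\tfrac12+\tfrac{x}{12}-\cdots$ at $x=\alpha\log q$; the omitted Euler--Maclaurin terms supply the rest. Your Abel-summation/induction route would (correctly) produce $\tfrac{q^{\alpha}}{q^{\alpha}-1}$ and thereby expose the discrepancy rather than prove the stated formula. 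Fortunately this does not damage the paper's applications: the lemma is only ever invoked to see that $M\sum_i q^{i/2}i^{m-3}$ and $\sum_i q^{i/2}i^{m-2}$ share the \emph{same} leading coefficient (whatever it is), so the cancellation down to $O(q^M M^{m-3})$ survives; but the explicit secondary constants the paper then quotes from this lemma should not be trusted.
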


\begin{proof}
Let $f(x) = q^{\frac{(t-j)x}{t}} x^{m-t}$. Then, replacing the sum on the left hand-side by an integral we obtain 
\begin{equation}\label{eq:EulerMaclaurin1}
\sum_{i=0}^{M} f(i) \sim \int_0^M f(i) \diff i + \frac{f(M)}{2} + \sum_{k=1}^{\infty} \frac{B_{2k}}{(2k)!} \left(f^{(2k-1)}(M) - f^{(2k-1)}(0) \right),
\end{equation}
where $B_{2k}$ is the $2k$-th Bernoulli number. We note
\begin{align*}
\int_0^M q^{\frac{(t-j)i}{t}} i^{m-t} \diff i= \left(\frac{-t}{(t-j)\log q} \right)^{m-t+1} \int_0^{- \log q^{\frac{(t-j)M}{t}}} v^{m-t}e^{-v} \diff v,
\end{align*}
via a change of variables. We can write this in terms of gamma functions as
$$
\left( \frac{- t}{(t-j)\log q} \right)^{m-t+1} \left( \Gamma (m-t+1) -  \Gamma \left(m-t+1, \frac{(t-j)\log q}{-t} M\right)  \right).
$$
Since $m-t \in \mbZ_{>0}$, the above is 
\begin{align*}
=& \left( \frac{- t}{(t-j)\log q} \right)^{m-t+1} (m-t)! \left( 1 -  q^{ \frac{t-j}{t} M} \sum_{k=0}^{m-t} \left(\frac{(t-j) \log q}{-t} \right)^{-k} \frac{M^k}{k!}  \right)\\
= &  \frac{t}{(t-j)\log q}     q^{ \frac{t-j}{t} M} M^{m-t}  + O \left(q^{ \frac{t-j}{t} M} M^{m-t-1} \right).
\end{align*}
Computing derivatives of $f$, we obtain that the sum over $k$ in \eqref{eq:EulerMaclaurin1} is $O \left(q^{ \frac{t-j}{t} M} M^{m-t-1} \right)$. Putting this together leads to the claimed result. 
\end{proof}

\begin{lemma}\label{L:Technical2}
For $k$ odd, we have
$$
\frac{1}{k!} \sum_{i=0}^{M} i(M-i)^k \sim \frac{1}{(k+2)!} M^{k+2},
$$
as $M\to \infty$.
\end{lemma}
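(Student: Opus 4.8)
The plan is to compare the sum with the corresponding integral, exactly in the style of the proof of Lemma \ref{L:Technical}. Write $g(x) = x(M-x)^k$ and apply Euler--Maclaurin summation, so that
$$
\sum_{i=0}^{M} g(i) = \int_0^M g(x)\,\diff x + \frac{g(0)+g(M)}{2} + \sum_{\ell \geq 1} \frac{B_{2\ell}}{(2\ell)!}\bigl(g^{(2\ell-1)}(M) - g^{(2\ell-1)}(0)\bigr).
$$
Here $g(0) = g(M) = 0$, and each derivative $g^{(2\ell-1)}$ is a polynomial in $x$ and $M$ of total degree at most $k$; hence every term beyond the integral is $O_k(M^k)$, which is negligible against the eventual $M^{k+2}$.

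Next I would evaluate the main integral via the substitution $u = M - x$:
$$
\int_0^M x(M-x)^k\,\diff x = \int_0^M (M-u)u^k\,\diff u = \frac{M^{k+2}}{k+1} - \frac{M^{k+2}}{k+2} = \frac{M^{k+2}}{(k+1)(k+2)}.
$$
Dividing by $k!$ yields $\dfrac{M^{k+2}}{(k+2)!}$, which is precisely the claimed leading term, and the lemma follows.

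An alternative, purely combinatorial route avoids analysis altogether: the hockey-stick identity $\sum_{i=0}^{M} \binom{i}{1}\binom{M-i}{k} = \binom{M+1}{k+2}$ gives an exact closed form. Since $\binom{i}{1} = i$, since $k!\binom{M-i}{k} = (M-i)^k + O_k\bigl((M-i)^{k-1}\bigr)$ uniformly, and since the boundary terms with $M-k < i \leq M$ (where $\binom{M-i}{k}$ vanishes but $(M-i)^k$ need not) contribute only $O_k(M)$ in total, one gets $\frac{1}{k!}\sum_{i=0}^M i(M-i)^k = \binom{M+1}{k+2} + O_k(M^{k+1}) = \frac{M^{k+2}}{(k+2)!} + O_k(M^{k+1})$. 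Either way, there is essentially no obstacle; the only point requiring a little care is confirming that the Euler--Maclaurin remainder (respectively, the errors in the binomial expansions) are genuinely of lower order, so that they are absorbed into the $M^{k+2}$ main term. The hypothesis that $k$ is odd is not used and is kept only because that is the parity in which the lemma is later invoked.
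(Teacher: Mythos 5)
Your argument is correct, and it takes a genuinely different route from the paper. The paper expands $(M-i)^k$ by the binomial theorem, interchanges the order of summation, and uses $\sum_{i\leq M} i^{j+1} \sim M^{j+2}/(j+2)$, arriving at the constant $\frac{1}{k!}\sum_{j=0}^{k}\binom{k}{j}\frac{(-1)^j}{j+2}$; the identification of this alternating sum with $\frac{1}{(k+1)(k+2)}$ (equivalently, the Beta integral $\int_0^1 x(1-x)^k\,\diff x$) is left implicit in the final step. Your comparison of the sum with $\int_0^M x(M-x)^k\,\diff x = \frac{M^{k+2}}{(k+1)(k+2)}$ produces the constant directly and transparently, with the Euler--Maclaurin corrections harmless because $g$ is a polynomial of degree $k+1$, so the correction series terminates and every boundary term is $O_k(M^k)$; in this sense your route is cleaner than the paper's, which in effect reproves the same Beta identity in disguise. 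Your hockey-stick variant is also valid and even yields an exact closed form $\binom{M+1}{k+2}$ with an explicit $O_k(M^{k+1})$ error, which is stronger than the stated asymptotic. You are also right that the hypothesis that $k$ is odd plays no role; the paper's proof does not use it either, and it is present only because the lemma is applied with $k=2k'-1$ in the proof of Theorem \ref{T:SymmP2}.
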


\begin{proof}
By the binomial theorem, the left hand-side is 
\begin{align*}
\frac{1}{k!} \sum_{i=0}^{M} i \sum_{j=0}^k \binom k j (-i)^j M^{k-j} &= \frac{1}{k!}  \sum_{j=0}^k \binom k j (-1)^j  M^{k-j} \sum_{i=0}^{M} i^{j+1}\\
	& \sim \frac{M^{k+2} }{k!}  \sum_{j=0}^k \binom k j   \frac{(-1)^j}{j+2},
\end{align*}
which concludes the proof. 
\end{proof}

\begin{proof}[Proof of Theorem \ref{T:SymmP2}]
In the case when $m=2$, the number of irreducible points of $\Sym^2\mbP^2(K)$ is given by \eqref{eq:irr}. The  reducible points of $\Sym^2\mbP^2(K)$ are all of the type in Lemma \ref{BigRed}. Thus, the number of reducible points of $\Sym^2\mbP^2(K)$ is
$$ 
 \sim \frac{S_K(3,1)^2}{18} q^M M,
$$
as $M\to \infty$, which concludes the proof of the case $m=2$. 

Suppose from now that $m\geq 3$. The singular points of $\Sym^m \mbP^2(K)$ are points coming from the diagonal $\Delta \in  \left(\mbP^2\right)^m$, i.e.  
$$
D(K) = \bigcup_{1 \leq i < j \leq m} \left\{\pi(v_1, \ldots, v_m) \mid v_1, \ldots, v_m \in \mbP^2(K), v_i = v_j \right\},
$$ 
where $\pi$ is the projection $\left(\mbP^2\right)^m \to \Sym^m \mbP^2$ introduced in Section \ref{S:Geometry of the Hilbert scheme}. If $ m\geq 3$, by Theorem \ref{thm:products} , the contribution to $N_{ \Sym^m \mbP^2 } (M)$ coming from these points involves counting points $v \in \left(\mbP^2\right)^{m-2}(K)$ and $v_{m-1} \in \mbP^2(K)$ such that $H_{\omega^{-1}_{\left(\mbP^2\right)^{m-2}}}(v) \left(H_{\omega^{-1}_{\mbP^2}}(v_{m-1})\right)^2=q^M$. Thus we have 
\begin{align*}
\sim& \left(c_{\omega^{-1}_{\mbP^2}}(\mbP^2)\right)^{m-1}  q^{\frac{M}{2}} \frac{\left(\log q\right)^{m-1}}{2 m!(m-3)!} \sum_{i=0}^M q^{\frac{i}{2}} i^{m-3} (M-i)\\
=& \frac{S_K(3,1)^{m-1} }{2 m! (m-3)!3^{m-1} } q^{\frac{M}{2}} \left( M \sum_{i=0}^M q^{\frac{i}{2}} i^{m-3} - \sum_{i=0}^M q^{\frac{i}{2}} i^{m-2} \right),
\end{align*}
by Example \ref{Ex:Pn} and Lemma \ref{Ex:Hilb2P2}. Now, by Lemma \ref{L:Technical}, the terms of order $M^{m-2}$ cancel out and we obtain that the above is 
$$
\sim  \frac{2S_K(3,1)^{m-1} }{3^{m-1} m! (m-3)! \left(\log q\right)^2} q^{M} M^{m-3}.
$$

The non-singular points of $\Sym^m \mbP^2(K)$ are points $\pi(v_1, \ldots, v_m)$, where $v_1, \ldots, v_m \in \mbP^2(\overline{K})$ are all distinct and can be partitioned into $c_1$ Galois conjugacy cycles of length 1, $c_2$ cycles of length $2$, $\ldots$, $c_m$ cycles of length $m$, such that $\sum_{i=1}^m ic_i = m$. We remark that a point $v_i$ in a conjugacy cycle of length $k$ is a point in $\mbP^2(\overline{K})$ such that $[K(v_i):K]=k$ and the other points in the cycle are its distinct Galois conjugates. It is convenient to analyse all these cases depending on how many cycles of length $>1$ there are.

\emph{0 cycles of length $>1$.} This implies $c_1 = m$ and , so these are precisely the reducible points of the shape $\pi(v_1, \ldots, v_m)$, where $v_i \in \mbP^2(K)$ and $v_i \neq v_j$ for all $1 \leq i,j \leq m$, whose contribution is given by Corrolary \ref{BigRed} and is
$$
\sim \frac{S_K(3,1)^m}{ 3^m m!(m-1)!} q^M M^{m-1} ,
$$
 as $M\to \infty$.
 
\emph{1 cycle of length $>1$.} Thus, $c_1=m-j$ and $c_j=1$, where $2\leq j \leq m$. The case when $j=m$  encompasses exactly the irreducible points and their contribution is given by \eqref{eq:irr}. Now fix j such that $2\leq j < m$. These are points $\pi(v_1, \ldots, v_m)$ such that $v_1, \ldots, v_{m-j} \in \mbP^2(K)$, $v_{m_j+1} \in \mbP^2(\overline{K})$, $[ K(v_{m_j+1}):K ] = j$, and $v_{m_j+1}, \ldots, v_m$ are the $j$ distinct Galois conjugates of $v_{m_j+1}$. Counting such points with height $H_{\omega^{-1}_{\Sym^m\mbP^2}}(\pi(v_1, \ldots, v_m)) = q^M $ corresponds to counting $v \in \left(\mbP^2\right)^{m-j}(K)$ with $H_{\omega^{-1}_{\left(\mbP^2\right)^{m-j}}}(v)=q^i$ and $w \in \mbP^2(\overline{K})$ such that $[K(w):K]=j$ and $H_{\omega^{-1}_{\mbP^2}}(w)^j=q^{M-i}$, for $0\leq i \leq M$. Thus, it is equal to
\begin{equation}\label{eq:Contrib(m-j)(j)}
\frac{j! }{m!} \sum_{i=0}^M N_{\left(\mbP^2\right)^{m-j}}(i)   N_{{\Sym^j\mbP^2}^\text{irr}}\left(\frac{M-i}{j}\right) .
\end{equation}
By Manin\rq{}s conjecture for the irreducible points in $\Hilb^m\mbP^2$ and Theorem \ref{thm:products}, we expect that, as $L\to \infty$, there are
$$
\sim \left(c_{\omega^{-1}_{\mbP^2}}\left(\mbP^2\right)\right)^{m-j} c_{\omega^{-1}_{\Hilb^j\mbP^2}}\left(\Hilb^j\mbP^2\right) \frac{\left(\log q\right)^{m-j+2}}{(m-j+1)!} q^L L^{m-j+1} 
$$
points $(v,w) \in \left(\mbP^2\right)^{m-j} \times \Sym^j\mbP^2 (K)^{\text{irr}}$ of height $H_{\omega^{-1}_{\left(\mbP^2\right)^{m-j}}}(v) H_{\omega^{-1}_{\Sym^j\mbP^2}}(w) = q^L$, since the rank of the Picard group of the product variety is $m-j+2$. If $2< j <m$, this is at most $\sim L^{m-2}$, and only the case $j=2$ gives $\sim L^{m-1}$. This implies that the number of points given by \eqref{eq:Contrib(m-j)(j)} is at most $O\left(M^{m-2}\right)$ for $2< j <m$, and thus, does not contribute to the main term in $N_{\Sym^m\mbP^2}(M)$. In the case when $j=2$, we obtain that that \eqref{eq:Contrib(m-j)(j)} is 
\begin{align*}
\sim& \frac{2 }{m!}   \left(c_{\omega^{-1}_{\mbP^2}}(\mbP^2)\right)^{m-2} c_{\omega^{-1}_{\Hilb^2\mbP^2}}\left(\Hilb^2\mbP^2\right) \frac{\left(\log q\right)^m}{(m-3)!} \sum_{i=0}^M q^{i + \frac{M-i}{2}} i^{m-3} \frac{M-i}{2}\\
=& \frac{S_K(3,1)^m }{3^m m! (m-3)!} q^{\frac{M}{2}}\left( M \sum_{i=0}^M q^{\frac{i}{2}} i^{m-3} - \sum_{i=0}^M q^{\frac{i}{2}} i^{m-2} \right),
\end{align*}
by Example \ref{Ex:Pn} and Lemma \ref{Ex:Hilb2P2}. Now, by Lemma \ref{L:Technical}, the terms of order $M^{m-2}$ cancel out and we obtain that the above is 
$$
\sim  \frac{4S_K(3,1)^m }{3^m m! (m-3)! \left(\log q\right)^2} q^{M} M^{m-3}.
$$
Through a similar method we can show that the contribution from the cases when $2<j<m$ is in fact at most $O\left( M^{m-4} \right)$.

\emph{k cycles of length $>1$, where $1 < k \leq \lfloor m/2 \rfloor$.} This is a generalisation of the previous case. Thus, we have $c_1=m-2k$ and $\sum_{i=2}^m c_i = k$. We expect that the number of points $x \in \left(\mbP^2\right)^{m-2k}(K) \times \left(\Sym^{2}\mbP^2\right)^{j_1} (K)^{\text{irr}} \ldots \times \left( \Sym^{m}\mbP^2 \right)^{j_m} (K)^{\text{irr}}$, where $\sum_{i=2}^m i j_i = 2k$, of height $H(x)=q^L$, where $H$ is the product of the anticanonical heights, is 
$$
\left(c_{\omega^{-1}_{\mbP^2}}(\mbP^2)\right)^{m-2k} \prod_{i=2}^m \left( c_{\omega^{-1}_{\Hilb^{i}\mbP^2}}\left(\Hilb^{i}\mbP^2\right) \right)^{j_i} \frac{\left(\log q\right)^{m-2k+2l}}{(m-2k+2l-1)!} q^L L^{m-2k+2l-1}, 
$$
as $L\to \infty$, since the rank of the Picard group of the product variety is $m-2k+2l$, where $l= j_2 + \ldots + j_m$. A simple calculation shows that only in the case when $j_2=k$ and $j_3= \ldots = j_{m} = 0$ the above is $\sim L^{m-1}$ and in all other cases we have at most $L^{m-2}$ points. Thus we analyse the former case. The number of such points with height $H_{\omega^{-1}_{\Sym^m\mbP^2}}(\pi(v_1, \ldots, v_m)) = q^M $ is  
\begin{align}\label{eq:Contrib(m-2k)(k)}
\frac{2^k}{m!} & \sum_{i=0}^M N_{\left(\mbP^2\right)^{m-2k}}(i) \sum_{i_1=0}^{M-i} N_{{\Sym^2\mbP^2}^\text{irr}}\left(\frac{i_1}{2}\right)  \ldots \sum_{i_{k-1}=0}^{M-i-i_1 - \ldots i_{k-2}}     \nonumber \\
	& \times N_{{\Sym^2\mbP^2}^\text{irr}}\left(\frac{i_{k-1}}{2}\right) N_{{\Sym^2\mbP^2}^\text{irr}}\left(\frac{M-i-i_1-\ldots -i_{k-1}}{2}\right).
\end{align}
Denote $M-i-i_1-\ldots - i_{k-2}$ by $L_1$. Then, the last sum in \eqref{eq:Contrib(m-2k)(k)} is
\begin{align*}
 \sim & \left( c_{\omega^{-1}_{\Hilb^2\mbP^2}}\left(\Hilb^2\mbP^2\right) \right)^2 \frac{\left(\log q\right)^4}{2^2} q^{\frac{L_1}{2}} \sum_{i=0}^{L_1}  i_{k-1}(L_1-i_{k-1})\\
    \sim & \left( c_{\omega^{-1}_{\Hilb^2\mbP^2}}\left(\Hilb^2\mbP^2\right) \right)^2 \frac{\left(\log q\right)^4}{2^2} q^{\frac{L_1}{2}} \frac{{L_1}^3}{3!},
\end{align*}
by Lemma \ref{L:Technical2}. We iterate this procedure for the sums in \eqref{eq:Contrib(m-2k)(k)} starting with the sum over $i_{k-2}$ up to the sum over $i_{i_1}$ to obtain that \eqref{eq:Contrib(m-2k)(k)} is 
\begin{align*}
\sim & \frac{2^k}{m!} \sum_{i=0}^M N_{\left(\mbP^2\right)^{m-2k}}(i) \left( c_{\omega^{-1}_{\Hilb^2\mbP^2}}\left(\Hilb^2\mbP^2\right) \right)^k \frac{\left(\log q\right)^{2k}}{2^k} q^{\frac{M-i}{2}} \frac{(M-i)^{2k-1}}{(2k-1)!}\\
\sim & \frac{ \left(c_{\omega^{-1}_{\mbP^2}}(\mbP^2)\right)^{m-2k} \left( c_{\omega^{-1}_{\Hilb^2\mbP^2}}\left(\Hilb^2\mbP^2\right) \right)^k \left(\log q\right)^{m} q^{\frac{M}{2}}}{m!(m-2k-1)!(2k-1)!} \sum_{i=0}^M q^{\frac{i}{2}} i^{m-2k-1}(M-i)^{2k-1}.
\end{align*}
By the binomial theorem, the sum over $i$ above is equal to
\begin{align*}
=& \sum_{j=0}^{2k-1} \binom {2k-1} j (-1)^{j}  M^{2k-1-j} \sum_{i=0}^M q^{\frac{i}{2}} i^{m-2k-1+j}\\
=&  \left(\frac{2}{\log q} +\frac{1}{2} \right)q^{\frac{M}{2}} M^{m-2} \sum_{j=0}^{2k-1} \binom {2k-1} j (-1)^{j}  + O\left( q^{\frac{M}{2}}M^{m-3} \right),
\end{align*}
by Lemma \ref{L:Technical}. Noting that $\sum_{j=0}^{2k-1} \binom {2k-1} j (-1)^{j}  =0$, we get a contribution $O\left( M^{m-3} \right)$. Through a similar method we can show that the contribution coming from the other choices of $c_i$'s is in fact at most $O\left( M^{m-4} \right)$.

In conclusion, the main contribution to $N_{\Sym^m\mbP^2}(M)$ comes from reducible points of the type described in Lemma \ref{BigRed}. 
\end{proof}


\begin{thebibliography}{999} 
\bibitem{A} Artin, E. (1967). Algebraic numbers and algebraic functions. Gordon and Breach, New York.
\bibitem{BM} Batyrev, V. V., Manin, Y. I. (1990). Sur le nombre des points rationnels de hauteur born\'{e}e des vari\'{e}t\'{e}s alg\'{e}briques. \emph{Math. Ann.}, \textbf{286}, 27--43.
\bibitem{BT1} Batyrev, V. V., Tschinkel, Y. (1996). Rational points on some Fano cubic bundles. \emph{C. R. Acad. Sci. Paris S\'{e}r. I Math.}, \textbf{323(1)}, 41--46.
\bibitem{Beauville} Beauville, A. (1983). Vari\'{e}t\'{e}s K\"{a}hleriennes dont la premi\'{e}re classe de Chern est nulle. \emph{J. Differential Geom.}, \textbf{18(4)}, 755--782.
\bibitem{Bourqui02} Bourqui, D. (2002). Fonctions z\^{e}ta des hauteurs des surfaces de Hirzebruch dans le cas fonctionnel. \emph{J. Number Theory}, \textbf{94}, 343--358. 
\bibitem{Bourqui} Bourqui, D. (2003). Fonctions z\^{e}ta des hauteurs des vari\'{e}t\'{e}s toriques en caract\'{e}ristique positive. Ph.D. Thesis. Math\'{e}matiques [math]. Universit\'{e} Joseph-Fourier - Grenoble I, 2003. Fran\c{c}ais. $<$tel-00004008$>$.
\bibitem{Briancon} Brian\c{c}on. J. (1977). Description de $\Hilb^n \mbC\{x, y\}$. \emph{Invent Math}, \textbf{41(1)}, 45--89.
\bibitem{BHB} Browning, T. D., Heath-Brown, R. (2018). Density of rational points on a quadric bundle in $\mbP^3 \times \mbP^3$. \url{https://arxiv.org/abs/1805.10715}.
\bibitem{Chen} Chen, W. (2019). Analytic number theory for $0$-cycles. \emph{Math. Proc. Camb. Phil. Soc.}, \textbf{166}, pp. 123--146.
\bibitem{DiPippo} DiPippo, S. A. (1990). Spaces of rational functions on curves over finite fields. Ph.D. Thesis. Harvard.
\bibitem{ES} Ellingsrud G., Str{\o}mme. S. A. (1987). On the homology of the hilbert scheme of points in the plane.  \emph{Invent. Math.}, \textbf{87(2)}, 343--352.
\bibitem{Fogarty68} Fogarty, J. (1968). Algebraic families on an algebraic surface. \emph{Amer. J. Math}, \textbf{90(2)}, 511--521.
\bibitem{Fogarty73} Fogarty, J. (1973). Algebraic families on an algebraic surface, II, The Picard scheme of the punctual Hilbert scheme. \emph{Amer. J. Math.}, \textbf{95(3)}, 660--687.
\bibitem{Fogarty77} Fogarty, J. (1977). Line Bundles on Quasi-Symmetric Powers of Varieties. \emph{J. Algebra}, \textbf{44(1)}, 169--180.
\bibitem{FMT} Franke, J., Manin, Y. I., Tschinkel, Y. (1989). Rational points of bounded height on Fano varieties. \emph{Invent. Math.}, \textbf{95}, 421--435.
\bibitem{G} G\"ottsche, L. (1994). Hilbert schemes of zero-dimensional subschemes of smooth varieties. \emph{Lecture notes in mathematics}, \textbf{1572}, Springer-Verlag.
\bibitem{Huizenga} Huizenga, J. (2016). Effective divisors on the Hilbert scheme of points in the plane and interpolation for stable bundles. \emph{J. Algebraic Geom.}, \textbf{25}, 19--75. 
\bibitem{I} Iarrobino, A. A. (1977). \emph{Punctual Hilbert Schemes}. Vol. 188. Mem. Amer. Math. Soc..  
\bibitem{KT} Kettlestrings, D., Thunder, J. L. (2015). Counting points of given height that generate a quadratic extension of a function field. \emph{Int. J. of Number Theory}, \textbf{11(2)}, 569--592. 
\bibitem{KuTh} Kumar, S., Thomsen, J.F. (2001). Frobenius splitting of Hilbert schemes of points on surfaces. \emph{Math. Annalen}, \textbf{319(4)}, 797--808.
\bibitem{Rudulier} Le Rudulier, C. (2014). Points alg\'{e}briques de hauteur born\'{e}e. Ph.D. Thesis. Universit\'{e} Rennes 1. \url{http://www.theses.fr/2014REN1S073}.
\bibitem {Peyre95} Peyre, E. (1995). Hauteurs et mesures de Tamagawa sur les vari\'{e}t\'{e}s de Fano, \emph{Duke Math. J.}, \textbf{79(1)}, 101--218.
\bibitem {Peyre12} Peyre, E. (2012). Points de hauteur born\'{e}e sur les vari\'{e}t\'{e}s de drapeaux en caract\'{e}ristique finie, \emph{Acta Arithmetica}, \textbf{152(2)}, 185--216. 
\bibitem{Rosen} Rosen, M. (2002). \emph{Number Theory in Function Fields}. Springer-Verlag, New York.
\bibitem{Schanuel} Schanuel, S. H. (1979). Heights in number fields. \emph{Bull. Soc. Math. France}, \textbf{107}, 433--449.
\bibitem{Schmidt2} Schmidt, W. M. (1995). Northcott's theorem on heights. II. The quadratic case. \emph{Acta Arith.}, \textbf{70(4)}, 343--375.
\bibitem{Serre89} Serre, J.-P. (1989). \emph{Lectures on Mordell--Weil theorem}. Aspects of mathematics, F. Viewveg.
\bibitem{Serre08} Serre, J.-P. (2008). \emph{Topics in Galois theory}. Second edition. Research Notes in Mathematics 1, A K Peters, Ltd., Wellesley, MA.
\bibitem{Stichtenoth} Stichtenoth, H. (1993). \emph{Algebraic Function Fields and Codes}. Springer-Verlag, Berlin.
\bibitem{TW} Thunder, J. L., Widmer, M. (2013). Counting points of fixed degree and given height over function fields. \emph{Bull. of the LMS}, \textbf{45(2)}, 283--300.
\bibitem{Wan} Wan, D. (1992). Heights and zeta functions in function fields. \emph{The arithmetic of function fields}, 455--463. 
\end{thebibliography}
\end{document}